\setlist{noitemsep}
\theoremstyle{plain}
\newtheorem*{proposition}{Proposition}
\newtheorem*{lemma}{Lemma}
\newtheorem*{conjecture}{Conjecture}
\theoremstyle{definition}
\newtheorem*{definition}{Definition}
\newtheorem*{example}{Example}
\theoremstyle{remark}
\newtheorem*{remark}{Remark}
\numberwithin{equation}{section}
\renewcommand{\bar}[1]{\overline{#1}}
\renewcommand{\hat}[1]{\widehat{#1}}
\renewcommand{\tilde}[1]{\widetilde{#1}}
\renewcommand{\vec}[1]{\bm{#1}}
\newcommand{\bC}{\mathbb{C}}
\newcommand{\bE}{\mathbb{E}}
\newcommand{\bP}{\mathbb{P}}
\newcommand{\bZ}{\mathbb{Z}}
\newcommand{\bk}{\mathbbm{k}}
\newcommand{\cF}{\mathcal{F}}
\newcommand{\cI}{\mathcal{I}}
\newcommand{\cK}{\mathcal{K}}
\newcommand{\cN}{\mathcal{N}}
\newcommand{\cO}{\mathcal{O}}
\newcommand{\cQ}{\mathcal{Q}}
\newcommand{\cT}{\mathcal{T}}
\newcommand{\cV}{\mathcal{V}}
\newcommand{\scF}{\mathscr{F}}
\newcommand{\scI}{\mathscr{I}}
\newcommand{\scM}{\mathscr{M}}
\newcommand{\scN}{\mathscr{N}}
\newcommand{\sA}{\mathsf{A}}
\newcommand{\se}{\mathsf{e}}
\newcommand{\sC}{\mathsf{C}}
\newcommand{\sE}{\mathsf{E}}
\newcommand{\sF}{\mathsf{F}}
\newcommand{\sR}{\mathsf{R}}
\newcommand{\sS}{\mathsf{S}}
\newcommand{\sT}{\mathsf{T}}
\newcommand{\sV}{\mathsf{V}}
\newcommand{\CY}{\mathrm{CY}}
\newcommand{\der}{\mathrm{der}}
\newcommand{\fin}{\mathrm{fin}}
\newcommand{\loc}{\mathrm{loc}}
\newcommand{\pt}{\mathrm{pt}}
\newcommand{\vir}{\mathrm{vir}}
\DeclareMathOperator{\Bl}{Bl}
\DeclareMathOperator{\BS}{BS}
\DeclareMathOperator{\ch}{ch}
\DeclareMathOperator{\cochar}{cochar}
\DeclareMathOperator{\coker}{coker}
\DeclareMathOperator{\DT}{DT}
\DeclareMathOperator{\cExt}{\mathcal{E}{\it xt}}
\DeclareMathOperator{\Ext}{Ext}
\DeclareMathOperator{\Frac}{Frac}
\DeclareMathOperator{\Free}{Free}
\DeclareMathOperator{\GL}{GL}
\DeclareMathOperator{\Gr}{Gr}
\DeclareMathOperator{\Hilb}{Hilb}
\DeclareMathOperator{\cHom}{\mathcal{H}{\it om}}
\DeclareMathOperator{\Hom}{Hom}
\DeclareMathOperator{\nlegs}{\#legs}
\DeclareMathOperator{\NP}{NP}
\DeclareMathOperator{\PT}{PT}
\DeclareMathOperator{\QM}{QM}
\DeclareMathOperator{\Quot}{Quot}
\DeclareMathOperator{\rank}{rank}
\DeclareMathOperator{\SO}{SO}
\DeclareMathOperator{\tr}{tr}
\DeclarePairedDelimiter{\inner}{\langle}{\rangle}
\title{The $4$-fold Pandharipande--Thomas vertex}
\author{Henry Liu}
\date{\today}
\begin{document}

\maketitle

\begin{abstract}
  We give a conjectural but full and explicit description of the
  (K-theoretic) equivariant vertex for Pandharipande--Thomas stable
  pairs on toric Calabi--Yau $4$-folds, by identifying torus-fixed
  loci as certain quiver Grassmannians and prescribing a canonical
  half of the tangent-obstruction theory. For any number of
  non-trivial legs, the DT/PT vertex correspondence can then be
  verified by computer in low degrees.
\end{abstract}

\begin{spacing}{1.15}
  \tableofcontents
\end{spacing}

\section{Introduction}

\subsection{Background and setup}

\subsubsection{}

Let $X$ be a non-singular quasi-projective toric Calabi--Yau $4$-fold
over $\bC$, and let $\PT_{\beta,n}(X)$ be the moduli space of
Pandharipande--Thomas (PT) stable pairs \cite{Pandharipande2009}
\begin{equation} \label{eq:pair}
  I = [\cO_X \xrightarrow{s} \cF]
\end{equation}
with $\ch(\cF) = (0, 0, 0, \beta, n) \in A^*(X)$. PT stability means
that $\cF$ is a pure $1$-dimensional sheaf and $\dim \coker s = 0$.
Denote the kernel/cokernel exact sequence by
\[ 0 \to \cI_C \to \cO_X \xrightarrow{s} \cF \to \cQ \to 0, \]
where $\cI_C$ is the ideal sheaf of the Cohen--Macaulay support curve
$C$ associated to the stable pair. We will always assume $C$ is
non-empty.

On the other hand, let $\DT_{\beta,n}(X)$ be the Donaldson--Thomas
(DT) moduli space \cite{Thomas2000} of ideal sheaves of curves
$Z \subset X$, viewed as ``DT-stable'' pairs \eqref{eq:pair} where
$\cF = \cO_Z$ and $s$ is the natural surjection. Note that
$\DT_{0,d}(X) = \Hilb(X, d)$ is the Hilbert scheme of $d$ points. PT
stable pairs originated as a more economical stability chamber in
which to study $\DT_{\beta,n}(X)$.

\subsubsection{}

In this paper we study enumerative invariants of $\DT_{\beta,n}(X)$
and $\PT_{\beta,n}(X)$ in equivariant K-theory, though all results
hold equally well in equivariant cohomology. Such K-theoretic
invariants require the following Oh--Thomas setup \cite{Oh2023}. For a
quasi-projective scheme $M$ with action of a torus $\sT$, let
$K_\sT(M)$ (resp. $K_\sT^\circ(M)$) denote the K-group of
$\sT$-equivariant coherent sheaves (resp. vector bundles) on $M$, {\it
  with base ring $\bZ[1/2]$ instead of $\bZ$}. They are modules for
$\bk_\sT \coloneqq K_\sT(\pt)$. A subscript $\loc$ denotes base change
to $\Frac \bk_{\tilde\sT}$ {\it where $\tilde\sT$ is a double cover of
  $\sT$}. Finally,
$\se_\sT(-) \coloneqq \sum_k (-1)^k \wedge^k(-)^\vee$ denotes
K-theoretic Euler class.

More specifically, for us $\sT = (\bC^\times)^4$ is the dense open
torus of $X$, with weights/coordinates denoted $t_1, t_2, t_3, t_4$.
Let $\kappa \coloneqq t_1t_2t_3t_4$ be the weight of the canonical
bundle $\cK_X \cong \kappa \otimes \cO_X$. Then there is a splitting
$\sT = \sA \times \bC_\kappa^\times$ where
$\sA \coloneqq \{\kappa = 1\}$ is the Calabi--Yau sub-torus.

\subsubsection{}

Let $M$ denote either $\DT_{\beta,n}(X)$ or $\PT_{\beta,n}(X)$. If
$\scI = [\cO_{M \times X} \to \scF]$ is the universal stable pair, and
$\pi_M\colon M \times X \to M$ is the projection, then
\begin{equation} \label{eq:obstruction-theory}
  \bE^\bullet \coloneqq R\cHom_{\pi_M}(\scI, \scI)_0^\vee[-1] \in D^b\mathscr{C}\!{\it oh}_\sT(M)
\end{equation}
is a $3$-term obstruction theory \cite[Proposition 4.1]{Oh2023},
symmetric in the sense that
$(\bE^\bullet)^\vee = \kappa \otimes \bE^\bullet[-2]$ by Serre
duality. In particular, $\sA$-equivariantly, the middle term is an
$\mathrm{O}(2m, \bC)$-bundle, and becomes an $\SO(2m, \bC)$-bundle
after choosing orientation data \cite{Borisov2017}. Results of
\cite{Borisov2017,Oh2023,Cao2020a,Bojko2021} then provide a localized
Oh--Thomas virtual cycle \footnote{We suppress the dependence of
  Oh--Thomas cycles on orientation data as it is irrelevant for the
  main point of this note, and assume throughout that suitable
  orientation data exist such that our claims hold, e.g.
  \eqref{eq:partition-function-factorization} below.}
\begin{equation} \label{eq:oh-thomas-cycle}
  \hat\cO^\vir_M = \iota_*\frac{\hat\cO^\vir_{M^\sA}}{\sqrt{\se_\sA}(\cN^\vir)} \in K_\sA(M)_{\loc}.
\end{equation}
where $\sqrt{\se_\sA}$ is the square-root Euler class \cite[\S
  5]{Oh2023}, $\iota\colon M^\sA \to M$ is the $\sA$-fixed locus,
and $\cN^\vir$ is the virtual normal bundle of $\iota$.

To emphasize, \eqref{eq:oh-thomas-cycle} (and
\eqref{eq:square-root-euler-class}) is only valid $\sA$-equivariantly,
not $\sT$-equivariantly, because $\bE^\bullet$ must actually be
symmetric instead of only symmetric up to an equivariant character.

\subsubsection{}

The square-root Euler class $\sqrt{\se_\sA}(E)$ of an
$\sA$-equivariant $\SO(2m, \bC)$-bundle $E$ is such that
\begin{equation} \label{eq:square-root-euler-class}
  (\sqrt{\se_\sA}(E))^2 = (-1)^m \hat \se_\sA(E)
\end{equation}
is the symmetrized \footnote{Note that $\det(E)^{1/2}$ is defined in
  K-theory following \cite[Lemma 5.1, Prop. 7.12]{Oh2023}, and exists
  even if $\det(E)$ does not admit a square root as a bundle.}
K-theoretic Euler class
$\hat \se_\sA(E) \coloneqq \se_\sA(E) \otimes \det(E)^{-1/2}$ up to a
sign. The remaining sign ambiguity is fixed by the orientation data
and, unlike $\se_\sA(E)$, truly depends on $E$ instead of just its
K-theory class. In other words, there is a K-theoretic splitting
\[ E = E_{1/2} + E_{1/2}^\vee \in K_\sA(\cdots), \]
certainly not unique, such that
\[ \sqrt{\se_\sA}(E) = \hat \se_\sA(E_{1/2}), \]
and a different choice of splitting $E_{1/2}'$ may change
$\sqrt{\se_\sA}(E)$ by a sign \footnote{If (a piece of) $F$ is
  nilpotent, we make sense of such an expression by implicitly adding
  a trivial $\bC^\times_t$-action, with respect to which $F$ has
  weight one, and then setting $t=1$ afterward, like in
  \cite{Thomas2022}.}
\[ \hat \se_\sA(E_{1/2} - E_{1/2}') = \hat \se_\sA(F - F^\vee). \]
In our situation, finding a correct (and perhaps also geometrically
meaningful) half $\cN^\vir_{1/2}$ is therefore a very delicate matter,
in some sense the only non-formal aspect of DT or PT theory on $X$.
This difficulty with signs was already recognized in the foundational
paper \cite{Nekrasov2020} studying invariants of
$\Hilb(\bC^4) = \bigsqcup_n \DT_{0,n}$. Our main
Conjecture~\ref{conj:DT-PT-Ovir} prescribes such a half of
\eqref{eq:obstruction-theory}, in order to explicitly identify the
cycle \eqref{eq:oh-thomas-cycle}.

\subsubsection{}

Let $M \in \{\DT, \PT\}$. The simplest K-theoretic enumerative
invariants of $M$ are
\begin{equation} \label{eq:partition-function}
  Z_\beta^M(X; y) \coloneqq \sum_{n \in \bZ} Q^n \chi\left(M_{\beta,n}, \hat\cO^\vir \otimes \hat \se_\sA(y \otimes \cO_X^{[n]})\right) \in \bk_{\sA \times \bC^\times_y,\loc}((Q)),
\end{equation}
where $\cO_X^{[n]} \coloneqq R\pi_{M*}(\scF)$ is the Nekrasov
insertion and we treat the variable $y$ coupled to it as an
equivariant variable. A basic conjecture in this subject is the {\it
  DT/PT correspondence}
\begin{equation} \label{eq:DT-PT-partition-function-correspondence}
  Z_\beta^{\PT}(X;y) \stackrel{?}{=} \frac{Z_\beta^{\DT}(X;y)}{Z_0^{\DT}(X;y)},
\end{equation}
generalizing the DT/PT correspondence for $3$-folds \cite[\S
3.3]{Pandharipande2009}. As $X$ is toric, the $\sA$-equivariant
localization \eqref{eq:oh-thomas-cycle} and standard considerations
\cite[\S 4]{Maulik2006}\cite[\S 2.4]{Cao2020} imply
\begin{equation} \label{eq:partition-function-factorization}
  Z_\beta^M(X; y) = \sum_{\vec\pi} \prod_{e\colon \alpha\to\beta} E_{\vec\pi(e)}(\vec t_{\alpha\beta}; y) \prod_\alpha V_{(\vec\pi(e_{\alpha,i}))_{i=1}^4}^M(\vec t_\alpha; y)
\end{equation}
factorizes according to the toric $1$-skeleton of $X$ into
contributions of vertices $\alpha \in V(X)$, which are toric charts
$U_\alpha$ with toric coordinates of weights denoted $\vec t_\alpha$,
and edges $[\alpha \to \beta] \in E(X)$, which are non-empty
intersections $U_\alpha \cap U_\beta$ with toric coordinates of
weights denoted $\vec t_{\alpha\beta}$. Here $(e_{\alpha,i})_{i=1}^4$
are the four incident edges at each vertex $\alpha$, and the sum is
over {\it edge labelings} $\vec\pi\colon E(X) \to \Pi_{3d}$ by solid
partitions called the {\it profiles} of edges; see
\S\ref{sec:PT-full-fixed-loci} for details. Note that $E_\pi$ can be
arranged to be a purely combinatorial product independent of $M$. Then
\eqref{eq:DT-PT-partition-function-correspondence} may be refined into
the stronger conjectural {\it DT/PT vertex correspondence}
\begin{equation} \label{eq:DT-PT-vertex-correspondence}
  V^{\PT}_{\pi^1,\pi^2,\pi^3,\pi^4}(\vec t; y) \stackrel{?}{=} \frac{V^{\DT}_{\pi^1,\pi^2,\pi^3,\pi^4}(\vec t; y)}{V^{\PT}_{\emptyset,\emptyset,\emptyset,\emptyset}(\vec t; y)}.
\end{equation}

\subsubsection{}

The vertices $V^{\DT}$ and $V^{\PT}$ have a strong combinatorial
flavor, as they are sums of torus-localization contributions from a
fixed toric chart $\bC^4 \subset X$. For instance, the restrictions
$\cO_Z\big|_{\bC^4}$ of DT fixed points $[\cO_X \to \cO_Z]$ are in
bijection with {\it solid}, or {\it four-dimensional}, {\it partitions
  with legs} \cite[\S 2.2]{Nekrasov2020} \cite[Definition
2.3]{Cao2020}, where each box in the partition represents a
$\sT$-weight space of $\cO_Z\big|_{\bC^4}$. As such, $V^{\DT}$ is a
weighted generating function of solid partitions with legs,
generalizing the famous series $1/\prod_{n>0} (1-Q^n)$ and
$1/\prod_{n>0} (1-Q^n)^n$ counting integer (two-dimensional) and plane
(three-dimensional) partitions respectively.

Similarly, PT fixed points also correspond to certain types of box
configurations, possibly now with non-trivial moduli. However, the
description \cite{Pandharipande2009a,Jenne2021} of these PT
configurations for $3$-folds does not work analogously for $4$-folds
and a more sophisticated approach is necessary. A consequence of our
main conjecture is a method, as explicit as one could hope for, to
compute $V^{\DT}$ and $V^{\PT}$ term-by-term via an understanding of
the geometry of these fixed loci.

\subsection{Main results}

\subsubsection{}

Our first result is an identification of $\sA$-fixed loci (and
$\sT$-fixed loci) in PT moduli spaces as products of quiver
Grassmannians. Details are given in \S\ref{sec:fixed-loci}.

\begin{proposition} \label{prop:fixed-loci}
  Let $F_{\vec\pi} \subset \PT_\beta(X)^\sA$ be the components of
  stable pairs whose profiles along edges are given by the edge
  labeling $\vec\pi\colon E(X) \to \Pi_{3d}$. Then
  \[ F_{\vec \pi} \cong \prod_\alpha \Gr(\bar M^\CY_\alpha) \]
  where $\bar M^\CY_\alpha$ is the quiver of \S\ref{sec:PT-CY-quiver}
  for the four legs $(\vec \pi(e_{\alpha,i}))_{i=1}^4$ in the toric
  chart $U_\alpha \subset X$. Each $\Gr(\bar M^\CY_\alpha)$ carries a
  perfect obstruction theory.
\end{proposition}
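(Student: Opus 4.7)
\medskip

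\noindent\textbf{Proof plan.} The strategy is to reduce the global $\sA$-fixed locus to local contributions vertex-by-vertex, using standard toric localization, and then identify each local piece as a quiver Grassmannian.

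First I would use toric localization on $X$: any stable pair $I \in \PT_\beta(X)^\sA$ is determined by its restrictions $I|_{U_\alpha}$ to the affine toric charts $U_\alpha \cong \bC^4$, which must themselves be $\sA$-fixed, glued along overlaps $U_\alpha \cap U_\beta$. On such an overlap the restricted pair is supported along the edge $[\alpha\to\beta]$ and is determined (up to $\bC^\times_\kappa$-equivariant structures that contribute no moduli) by its transversal profile, a solid partition recording $\vec\pi(e_{\alpha\beta})$. Fixing $\vec\pi$ rigidifies all edge data, so the moduli problem decomposes as $F_{\vec\pi} \cong \prod_\alpha F_\alpha$ where $F_\alpha$ parametrizes $\sA$-fixed stable pairs on $\bC^4$ with prescribed four legs $(\vec\pi(e_{\alpha,i}))_{i=1}^4$.

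Second I would identify $F_\alpha$ with the quiver Grassmannian $\Gr(\bar M^\CY_\alpha)$. Since $\sA \subset \sT$ has codimension one along the $\bC^\times_\kappa$-factor, an $\sA$-fixed $\cF|_{U_\alpha}$ decomposes into $\kappa$-weight spaces, while the three remaining torus directions fix the $\bZ^3$-weight support. The ``maximal'' $1$-dimensional subscheme compatible with the given legs carves out a finite-dimensional piece $\bar M^\CY_\alpha$ of the coordinate ring $\bC[x_1,\ldots,x_4]$, viewed as a representation of the quiver whose vertices are the relevant $\bZ^3$-weights and whose arrows are multiplication by $x_1,\ldots,x_4$. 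An $\sA$-fixed PT pair on $U_\alpha$ then corresponds exactly to choosing a $\bC^\times_\kappa$-graded subrepresentation satisfying the purity of $\cF$ and $0$-dimensionality of $\coker s$; these open conditions are built into the construction of $\bar M^\CY_\alpha$ in \S\ref{sec:PT-CY-quiver}, leaving the set of such subrepresentations to form a closed subscheme of the ordinary Grassmannian of subspaces, i.e., a quiver Grassmannian.

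For the perfect obstruction theory on $\Gr(\bar M^\CY_\alpha)$, I would use the standard deformation theory of subrepresentations: at a point $[N \hookrightarrow \bar M^\CY_\alpha]$ the tangent space is $\Hom_Q(N, \bar M^\CY_\alpha/N)$ and obstructions lie in $\Ext^1_Q(N, \bar M^\CY_\alpha/N)$, which globalize over the Grassmannian to a two-term complex of locally free sheaves. Since the $\sT$-weighted quiver at a toric vertex has global dimension $\le 1$ after the $\CY$ truncation, this gives the desired perfect obstruction theory.

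\medskip

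\noindent\textbf{Main obstacle.} The delicate point is the second step: setting up $\bar M^\CY_\alpha$ so that its subrepresentations parametrize \emph{exactly} the $\sA$-fixed PT pairs with the prescribed legs---no larger, no smaller. For $3$-folds this was worked out in \cite{Pandharipande2009a,Jenne2021} using explicit ``box configurations'', but the $4$-fold case acquires a genuinely new $\bC^\times_\kappa$-moduli direction and the PT stability conditions (purity of $\cF$, $\dim\coker s = 0$) must be carefully translated into algebraic conditions on the quiver representation. Getting this bijection, rather than a mere morphism, is precisely what forces the CY truncation in the definition of $\bar M^\CY_\alpha$ and is the geometric heart of the proposition.
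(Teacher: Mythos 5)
Your overall framework is correct: decompose $F_{\vec\pi}$ vertex-by-vertex, identify each vertex contribution with a quiver Grassmannian of the $\sA$-graded quotient module, and invoke a perfect obstruction theory on quiver Grassmannians. This broadly matches the paper. However there are two substantive problems.

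First, your argument for the perfect obstruction theory is wrong. You claim the $\Hom/\Ext^1$ data globalize to a two-term complex of locally free sheaves because the quiver ``has global dimension $\le 1$ after the CY truncation.'' This fails: as the paper explicitly notes in \S\ref{sec:PT-CY-quiver}, the quiver $Q^\CY$ is in general \emph{not} acyclic, so its path algebra need not have global dimension $\le 1$; moreover even when $\Ext^{\ge 2}$ vanishes pointwise, the sheaves $\cHom_Q(\scN, \scM/\scN)$ and $\cExt^1_Q(\scN, \scM/\scN)$ need not be locally free, so the ``globalization'' is not automatic. The paper avoids this entirely by using the GIT presentation \eqref{eq:quiver-grassmannian-GIT-presentation} of $\Gr_{\vec e}(M)$ as the zero locus of a $\GL(\vec e)$-equivariant section on a smooth prequotient (following Caldero--Reineke), which yields the explicit three-term complex of vector bundles \eqref{eq:quiver-grassmannian-obstruction-theory}; a perfect obstruction theory then follows from the basic case of \cite{Graber1999} (or, derived-geometrically, from the cotangent complex of the derived zero locus). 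This works uniformly with no acyclicity or global-dimension hypotheses, which is essential because these quiver Grassmannians can be arbitrarily singular (Example~\ref{ex:PT-full-fixed-locus-Quot-scheme} and \cite{Reineke2013}).

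Second, the step you flag as the ``main obstacle'' --- that the set-theoretic bijection between $\sA$-fixed PT pairs and quiver subrepresentations is actually a scheme-theoretic isomorphism --- is indeed the crux, and you do not resolve it. The paper handles it by citing \cite[Proposition 1, Lemma 3]{Pandharipande2009a}: the $\sA$-fixed Zariski tangent space $\Ext^0(I,\cF)^\sA$ equals $\Hom_Q(\bar N^\CY, \bar M^\CY/\bar N^\CY) = T_{\bar N^\CY}\Gr(\bar M^\CY)$ (Proposition~\ref{prop:PT-fixed-locus-tangent-space}), which rules out scheme-theoretic thickening, and then uses the local-to-global isomorphism of $\Ext^0$ from \cite[Eq.\ (3.5)]{Pandharipande2009a} to obtain the product over vertices $\alpha$. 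Also, a smaller point: you write that the subrepresentations are ``$\bC^\times_\kappa$-graded,'' but that would characterize the $\sT$-fixed (not $\sA$-fixed) locus; the subrepresentations of $\bar M^\CY$ are only $\bZ^3$-graded, and the residual $\bC^\times_\kappa$-weight is precisely what can vary continuously in the $\sA$-fixed locus.
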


There is a similar description of $\sT$-fixed PT loci in terms of a
related quiver $\bar M_\alpha$ (\S\ref{sec:PT-full-quiver}). In stark
contrast to the $3$-fold case \cite[Theorem 1]{Pandharipande2009a}, we
can give examples of arbitrarily singular and non-toric $\sT$-fixed PT
components.

Crucially, using the perfect obstruction theory on quiver
Grassmannians, $F_{\vec\pi}$, and therefore each
$\PT_{\beta,n}(X)^\sA$, has a well-defined symmetrized {\it
  Behrend--Fantechi}-style virtual cycle, which we denote
\begin{equation} \label{eq:PT-fixed-locus-BF-cycle}
  \hat\cO^{\vir,\mathrm{BF}} \in K_\sA(\PT_{\beta,n}(X)^\sA)
\end{equation}
to distinguish it from the already-existing Oh--Thomas virtual cycle
$\hat\cO^\vir$. For notational consistency, define
\eqref{eq:PT-fixed-locus-BF-cycle} for $\DT_{\beta,n}(X)^\sA$ as well,
as the ordinary structure sheaf.

\subsubsection{}

Following \S\ref{sec:canonical-half}, the $\sA$-fixed locus $M^\sA$
admits a further $\bC^\times$-action (coming from
$\bC^\times_\kappa$), and the restriction of the obstruction theory
$\bE^\bullet$ can be made $\bC^\times$-equivariant as well. The
following is our main conjecture.

\begin{conjecture} \label{conj:DT-PT-Ovir}
  There exists a canonical halving
  \begin{equation} \label{eq:canonical-halving}
    \bE_{1/2} + \bE_{1/2}^\vee = \bE^\bullet\Big|_{M^{\sA \times \bC^\times}} \in K(M^{\sA \times \bC^\times}) \otimes \bk_{\sA \times \bC^\times,\loc},
  \end{equation}
  given explicitly in Definition~\ref{def:canonical-half}, such that
  \begin{equation} \label{eq:DT-PT-Ovir}
    \left(\iota_*\right)^{-1}\hat\cO^\vir_M \stackrel{?}{=} \left[\frac{\hat\cO^{\vir,\mathrm{BF}}_{M^{\sA \times \bC^\times}}}{\hat\se_{\bC^\times}(\cN^{\vir,\mathrm{BF}})} \frac{\hat \se_{\bC^\times}(\cT^{\vir,\mathrm{BF}}_{M^\sA})}{\hat \se_{\bC^\times}(\bE_{1/2}^{\sA\text{-fix}})}\frac{1}{\hat \se_{\sA \times \bC^\times}(\bE_{1/2}^{\sA\text{-mov}})}\right]\Bigg|_{\sA} \in K(M^\sA) \otimes \bk_{\sA,\loc}
  \end{equation}
  where the second fraction is a sign $\pm 1$ on any given connected
  component $F \subset M^{\sA \times \bC^\times}$.
\end{conjecture}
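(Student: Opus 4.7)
The plan is a two-stage equivariant localization: first Oh--Thomas $\sA$-localization on $M$, then a further $\bC^\times_\kappa$-equivariant BF-style virtual localization on $M^\sA$. Applied to the Oh--Thomas cycle, the first stage gives
\[ (\iota_*)^{-1}\hat\cO^\vir_M = \frac{\hat\cO^\vir_{M^\sA}}{\sqrt{\se_\sA}(\cN^\vir)}, \qquad \cN^\vir = (\bE^\bullet|_{M^\sA})^{\sA\text{-mov}}, \]
by \cite[\S 5]{Oh2023}. For the second stage, Proposition~\ref{prop:fixed-loci} equips each component $F_{\vec\pi}$ with a genuine POT whose virtual tangent class matches $(\bE^\bullet|_{M^\sA})^{\sA\text{-fix}}$ up to a term of the form $F - F^\vee$; this lets us identify $\hat\cO^\vir_{M^\sA}$ with the BF cycle $\hat\cO^{\vir,\mathrm{BF}}_{M^\sA}$, and standard K-theoretic virtual localization for $\bC^\times_\kappa$ then produces
\[ (\iota'_*)^{-1}\hat\cO^{\vir,\mathrm{BF}}_{M^\sA} = \frac{\hat\cO^{\vir,\mathrm{BF}}_{M^{\sA\times\bC^\times}}}{\hat\se_{\bC^\times}(\cN^{\vir,\mathrm{BF}})}. \]
Composing the two localizations produces the first factor on the RHS of \eqref{eq:DT-PT-Ovir}.

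The remaining two factors implement the identification of the canonical half $\bE_{1/2}$ of Definition~\ref{def:canonical-half} with the natural geometric halves. Concretely, one sets $\cN^\vir_{1/2} \coloneqq \bE_{1/2}^{\sA\text{-mov}}$, so that $\sqrt{\se_\sA}(\cN^\vir) = \hat\se_{\sA \times \bC^\times}(\bE_{1/2}^{\sA\text{-mov}})|_\sA$ up to an orientation sign; this yields the third factor. The second factor, a $\pm 1$ ratio, captures the discrepancy $\cT^{\vir,\mathrm{BF}}_{M^\sA} - \bE_{1/2}^{\sA\text{-fix}} = F - F^\vee$ in K-theory: such a shift does not alter the BF cycle itself, but does flip the sign of its symmetrized Euler class on each component. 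Both identifications are to be checked by an explicit K-theoretic decomposition of $\bE^\bullet|_{M^\sA}$ using the quiver presentation of \S\ref{sec:PT-CY-quiver}, matched summand-by-summand with the half provided by Definition~\ref{def:canonical-half}.

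The hard part will be fixing the signs coherently on each connected component $F_{\vec\pi}$. As the text emphasizes, $\sqrt{\se_\sA}$ depends genuinely on the choice of half, not merely on its K-theory class, so the only non-formal content is precisely that Definition~\ref{def:canonical-half} produces a globally consistent orientation matching Oh--Thomas's. This is the very sign subtlety identified in \cite{Nekrasov2020} for $\Hilb(\bC^4)$, now in the full generality of PT theory with four non-trivial legs. Absent any a priori principle that picks out $\bE_{1/2}$ uniquely, one should not expect a purely conceptual proof; instead, the conjecture is to be verified combinatorially in low degrees by computer, which is why it remains open.
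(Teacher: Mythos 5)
This statement is a conjecture, not a theorem: the paper offers no proof, only supporting evidence (the low-degree computer checks of Proposition~\ref{prop:DT-PT-check}, the comparison with the Nekrasov--Piazzalunga sign in Proposition~\ref{prop:CKM-vs-our-half}, and the $1$-leg quasimap heuristic of \S\ref{sec:1-leg}). Your proposal correctly unpacks the two-stage localization that motivates the \emph{shape} of \eqref{eq:DT-PT-Ovir} and honestly identifies the real content --- the component-by-component sign coherence of $\bE_{1/2}$ against the Oh--Thomas orientation --- as the part that has no conceptual derivation and must be checked computationally, which is exactly the paper's stance. One small imprecision: in the first paragraph you say the quiver-Grassmannian virtual tangent matches $(\bE^\bullet|_{M^\sA})^{\sA\text{-fix}}$ up to $F-F^\vee$, which fails on rank grounds since the latter is the full symmetric complex; the correct statement (which you do give correctly in the second paragraph) is that $\cT^{\vir,\mathrm{BF}}_{M^\sA}$ and the \emph{half} $\bE_{1/2}^{\sA\text{-fix}}$ differ by $F - F^\vee$. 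Also bear in mind that ``identifying $\hat\cO^{\vir}_{M^\sA}$ with $\hat\cO^{\vir,\mathrm{BF}}_{M^\sA}$'' and setting $\sqrt{\se_\sA}(\cN^\vir)=\hat\se_\sA(\bE_{1/2}^{\sA\text{-mov}})$ are not consequences of the decomposition but are precisely the assertions being conjectured --- your last paragraph acknowledges this, but the earlier phrasing reads as if those steps were established.
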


The term inside the brackets lives on
$M^{\sA \times \bC^\times} = M^{\sT}$ and is implicitly pushed forward
from there. We refer to the restriction of the second fraction above
to a connected component $F\subset M^{\sA \times \bC^\times}$ as the
{\it sign} of $F$, e.g. see Example~\ref{ex:CY-fixed-locus-P2}.

In other words, \eqref{eq:DT-PT-Ovir} states that the Oh--Thomas
virtual cycle $\hat\cO^\vir_{M^\sA}$ on the fixed locus $M^\sA$ agrees
with the $\bC^\times$-localized virtual cycle 
\begin{equation} \label{eq:behrend-fantechi-cycle-localization}
  \left[\frac{\hat\cO^{\vir,\mathrm{BF}}_{M^{\sA \times \bC^\times}}}{\hat \se_{\bC^\times}(\cN^{\vir,\mathrm{BF}})}\right]\Bigg|_\sA = \hat\cO^{\vir,\mathrm{BF}}_{M^\sA} \in K(M^\sA) \otimes \bk_{\sA,\loc}
\end{equation}
up to a sign at each $\bC^\times$-fixed component $F$. While the
restriction to $\sA$ in \eqref{eq:behrend-fantechi-cycle-localization}
is well-defined by properness of $M^\sA$, it is part of the conjecture
that the same restriction in \eqref{eq:DT-PT-Ovir} is well-defined.
Note that only $\hat\cO^{\vir,\mathrm{BF}}_{M^\sA}$, not
$\hat\cO^\vir_{M^\sA}$, admits the usual $\bC^\times$-equivariant
virtual localization formula \cite{Graber1999}, but the splitting
\eqref{eq:canonical-halving} which specifies these signs is only
defined on $M^{\sA \times \bC^\times}$, so it is important that
$M^\sA$ has the Behrend--Fantechi virtual cycle
\eqref{eq:PT-fixed-locus-BF-cycle}.

\subsubsection{}

The vertices $V^{\DT}$ and $V^{\PT}$ are defined by separating vertex
and edge contributions in
\[ \bE^\bullet\big|_I = \sum_\alpha \sV(I\big|_{U_\alpha}; \vec t_\alpha) + \sum_{\alpha \to \beta} \sE(I\big|_{U_{\alpha\beta}}; \vec t_{\alpha\beta}) \in \bk_{\sT,\loc} \]
and our canonical halving \eqref{eq:canonical-halving} similarly
arises from canonical halvings $\sV_{1/2}$ and $\sE_{1/2}$ of $\sV$
and $\sE$. Therefore, along with Proposition~\ref{prop:fixed-loci},
our main conjecture gives an explicit way to compute DT and PT
vertices. This was previously infeasible because the complexity of
$\sA$-fixed PT loci prohibited any real understanding of the
Oh--Thomas virtual cycle on it. We checked the following using
SageMath code \cite{Liu}, using the results of
\S\ref{sec:explicit-computation} to simplify computation.

\begin{proposition} \label{prop:DT-PT-check}
  Assuming Conjecture~\ref{conj:DT-PT-Ovir}, the DT/PT vertex
  correspondence \eqref{eq:DT-PT-vertex-correspondence} holds if:
  \begin{itemize}
  \item there is $1$ non-trivial leg of size $< 9$, modulo $Q^8$;
  \item there are $2$ non-trivial legs of total size $< 8$, modulo $Q^7$;
  \item there are $3$ non-trivial legs of total size $< 8$, modulo $Q^6$;
  \item there are $4$ non-trivial legs of total size $< 7$, modulo $Q^5$.
  \end{itemize}
\end{proposition}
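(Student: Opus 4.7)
The statement is verified by direct computer calculation, so the plan is primarily to assemble the ingredients into a form suitable for symbolic manipulation and then run the code in \cite{Liu}. The plan is to treat each of the four cases (one, two, three, or four non-trivial legs) uniformly: for each edge labeling $\vec\pi$ of a single toric vertex $\bC^4$ with the prescribed leg profiles, enumerate the connected components of the $\sT$-fixed locus via Proposition~\ref{prop:fixed-loci}, compute the Behrend--Fantechi cycle and canonical half on each, and then assemble the right-hand side of \eqref{eq:DT-PT-Ovir} into a rational function in $\vec t, y$ which serves as the coefficient of $Q^n$ in $V^M_{\pi^1,\pi^2,\pi^3,\pi^4}$.

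First I would enumerate fixed components. For $V^{\DT}$ this is classical: torus-fixed ideal sheaves on $\bC^4$ with prescribed legs are solid partitions with those asymptotics, of finite size bounded by the $Q$-truncation. For $V^{\PT}$, Proposition~\ref{prop:fixed-loci} reduces enumeration to understanding the quiver $\bar M^\CY_\alpha$ built from the four legs, and then enumerating the relevant quiver Grassmannians $\Gr(\bar M^\CY_\alpha)$ (via the results of \S\ref{sec:explicit-computation} that simplify this combinatorics). In each case I would use the $\bC^\times_\kappa$-action to refine to $\sT$-fixed components, on which the canonical halving of Conjecture~\ref{conj:DT-PT-Ovir} is defined. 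The truncation thresholds in the four bullets ensure that the relevant list of fixed components is finite and enumerable by SageMath within reasonable time.

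Next I would apply Conjecture~\ref{conj:DT-PT-Ovir} on each component: compute $\hat\cO^{\vir,\mathrm{BF}}$ and the virtual tangent and normal bundles from the quiver-Grassmannian perfect obstruction theory; compute the canonical half $\bE_{1/2}$ from Definition~\ref{def:canonical-half} restricted to the component, split into $\sA$-fixed and $\sA$-moving pieces; and take the ratio prescribed by \eqref{eq:DT-PT-Ovir}, including the sign of the component. Summing over components and weighting by $\hat\se_\sA(y \otimes \cO_X^{[n]})$ as in \eqref{eq:partition-function} yields $V^M_{\pi^1,\pi^2,\pi^3,\pi^4}$ as a Laurent polynomial in $\vec t, y, Q$ to the stated order. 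The DT/PT correspondence \eqref{eq:DT-PT-vertex-correspondence} then reduces to a finite equality of such Laurent polynomials, checked by subtraction.

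The main obstacle is computational complexity rather than mathematical subtlety: the number of solid partitions, and the dimension of the quiver Grassmannians entering $V^{\PT}$, grow rapidly, and the canonical half involves rational expressions in $\vec t$ whose simplification dominates runtime. This explains the asymmetric truncations in the four cases (fewer legs admit higher $Q$-degree). A secondary difficulty is correctly tracking the signs of components in the second fraction of \eqref{eq:DT-PT-Ovir}; a mistake here would spoil the comparison, so as a consistency check I would verify that the one-leg and no-leg degenerations reproduce known specializations (e.g.\ the Nekrasov $\Hilb(\bC^4)$ series \cite{Nekrasov2020} for the empty-leg case). Once this is in place, the four bullets are just the output of running the code at the stated truncations.
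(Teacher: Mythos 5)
Your proposal correctly identifies that the proposition is a finite computer check and lays out essentially the same plan the paper uses: enumerate $\sT$-fixed components via Proposition~\ref{prop:fixed-loci}, evaluate the right-hand side of \eqref{eq:DT-PT-Ovir} component by component (with the sign prescribed by the canonical half of Definition~\ref{def:canonical-half}), sum to get each vertex, and compare the two sides of \eqref{eq:DT-PT-vertex-correspondence} as truncated Laurent series. One small but useful refinement the paper makes explicit, which your plan leaves slightly more general than necessary: in the stated range, \emph{every} PT fixed component turns out to be localizable in the sense of \S\ref{sec:localizable-fixed-loci}, so one never has to integrate over a genuinely non-localizable quiver Grassmannian --- the equivariant localization of \eqref{eq:localizable-fixed-loci} with isolated $\sR$-fixed points suffices, and the unobstructed case of \S\ref{sec:unobstructed-fixed-loci} is not needed for these truncations. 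This is what makes the SageMath computation tractable and is worth recording as part of the verification, alongside the consistency checks you propose.
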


It turns out the DT/PT vertex correspondence is quite a strong litmus
test for whether our main conjecture prescribes the correct
halves/signs for both $V^{\DT}$ and $V^{\PT}$; see the uniqueness
claim of \cite[Proposition 1.17]{Cao2022}.
Proposition~\ref{prop:DT-PT-check} is especially non-trivial in light
of the complicated structure of $\sA$-fixed PT loci.

\subsubsection{}

Previous studies \cite{Nekrasov2020, Nekrasov2019, Cao2022,
  Monavari2022} of $4$-fold DT and PT vertices had restrictions on the
number of non-trivial legs $\pi^1, \pi^2, \pi^3, \pi^4$, so that in
particular $M^\sA = M^{\sA \times \bC^\times}$ and is a collection of
isolated points. In such a special case, \eqref{eq:DT-PT-Ovir}
simplifies to
\[ (\iota_*)^{-1}\hat\cO^\vir_M \stackrel{?}{=} \frac{\cO_{M^\sA}}{\hat \se_\sA(\bE_{1/2})}, \]
which, comparing with \eqref{eq:oh-thomas-cycle}, is just the
conjecture
$\sqrt{\se_\sA}(\bE) \stackrel{?}{=} \hat \se_\sA(\bE_{1/2})$.
However, these previous works choose a {\it different} explicit half
$\sV_{1/2}^{\text{CKM}}$ than our $\sV_{1/2}$, and require an
additional sign prescription for each fixed point in $M^\sA$.
Explicitly, for DT moduli spaces, the conjectural sign rule
\cite[Remark 1.18]{Cao2022} for a box configuration $\xi$ with empty
fourth leg is the {\it Nekrasov--Piazzalunga sign}
$(-1)^{\NP_4(\xi)}$. In \S\ref{sec:NP-sign}, we define both
$\sV_{1/2}^{\text{CKM}}$ and $\NP_4$, and show that our main
conjecture subsumes this sign prescription as follows.

\begin{proposition} \label{prop:CKM-vs-our-half}
  Let $\xi$ be a solid partition with legs with empty fourth leg. Then
  \begin{equation} \label{eq:CKM-vs-our-half}
    \frac{(-1)^{\NP_4(\xi)}}{\hat \se_\sA(\bar\sV_{1/2}^{\text{CKM}}(\xi))} = \frac{1}{\hat \se_\sA(\bar\sV_{1/2}(\xi))} \in \bk_{\sA,\loc}
  \end{equation}
  where $\bar \sV$ denotes the restriction $\sV\big|_\sA$.
\end{proposition}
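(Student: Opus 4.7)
The plan is to express the discrepancy between the two halves as an explicit antisymmetric K-theory class and evaluate its symmetrized Euler class by a purely combinatorial sign count.

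Since $\bar\sV_{1/2}(\xi)$ and $\bar\sV_{1/2}^{\text{CKM}}(\xi)$ are both halves of the same self-dual class $\bar\sV(\xi) \in K_\sA(\pt)$, their difference $D := \bar\sV_{1/2}(\xi) - \bar\sV_{1/2}^{\text{CKM}}(\xi)$ satisfies $D = -D^\vee$, and hence decomposes as $F - F^\vee$ for some explicit Laurent polynomial $F$ in the $\sA$-weights $t_1, t_2, t_3$ (with $t_4 = (t_1 t_2 t_3)^{-1}$). Identity \eqref{eq:CKM-vs-our-half} then reads
\[ \hat\se_\sA(F - F^\vee) = (-1)^{\NP_4(\xi)}. \]
By the auxiliary $\bC^\times_t$-trick of the paper's footnote, each rank-$1$ summand of $F$ contributes a factor $-1$ to the left-hand side, so the identity further reduces to the parity statement $\rank F \equiv \NP_4(\xi) \pmod 2$.

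The heart of the argument is the explicit computation of $F$. I would expand both halves starting from the character of the universal sheaf on $\bC^4$ associated with the solid partition $\xi$. Because the CKM recipe of \S\ref{sec:NP-sign} is built by essentially symmetrising the $3$-fold DT half in the fourth coordinate direction, while our $\sV_{1/2}$ from \S\ref{sec:canonical-half} is constructed from the canonical splitting dictated by the Calabi--Yau torus, their restrictions agree on the ``three-fold'' terms that do not couple to $t_4$. The residual difference $D$ isolates precisely the fourth-direction contributions that the two recipes treat asymmetrically, and the empty-fourth-leg hypothesis guarantees this residual is a finite Laurent polynomial whose positive part $F$ is unambiguous.

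The main obstacle is the final combinatorial matching of $\rank F$ with $\NP_4(\xi)$. I would check this first in the zero-leg case (where $\xi$ is finite) and the one-leg case (where $\xi$ is essentially a plane partition in one non-empty leg direction), then extend by additivity across the leg decomposition of $\xi$ recorded in \S\ref{sec:PT-full-fixed-loci} to handle two and three legs. No new geometric input is needed; the argument is a character-level bookkeeping whose only subtlety is correctly identifying the ``boundary'' boxes between the finite and infinite parts of $\xi$ that each recipe treats differently. I expect this correspondence to be essentially tautological once the definition of $\NP_4$ in \S\ref{sec:NP-sign} is unpacked, with the empty-fourth-leg hypothesis doing all the genuine work.
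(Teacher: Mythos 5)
Your opening move is correct and matches the paper's: both halves split the same self-dual class, so the discrepancy $D = \bar\sV_{1/2} - \bar\sV_{1/2}^{\text{CKM}}$ has the antisymmetric form $F - F^\vee$ and contributes only a sign. But your reduction to ``$\rank F \equiv \NP_4(\xi) \pmod 2$'' is wrong, and this is not a small slip. Because every quadratic term in $\sV_{1/2}$ and $\sV_{1/2}^{\text{CKM}}$ carries a factor of some $\tau_S$ with $|S|\ge 1$, which has rank $0$, the discrepancy $F$ automatically has $\rank F = 0$. If your reduction were right, the sign would always be $+1$ and the proposition would be false. What $\hat\se_\sA(F - F^\vee)$ actually equals is $(-1)^{\rank F^{\sA\text{-mov}}}$: the $\sA$-fixed rank-one pieces $w - w^\vee$ vanish identically after restriction to $\sA$ and contribute $+1$, while only the $\sA$-moving pieces give $-1$. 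The paper then observes that $\rank F = 0$ forces $\rank F^{\sA\text{-mov}} \equiv \rank F^{\sA\text{-fix}} \pmod 2$, and computes the latter. Missing this distinction collapses the whole statement.

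The rest of the proposal is also thinner than the argument requires. The claim that the two halves ``agree on three-fold terms not coupling to $t_4$'' and that the residual is confined to fourth-direction contributions does not match the explicit discrepancy table \eqref{eq:discrepancy-with-CKM}: every quadratic piece (the $\sF_\fin^\vee\sF_\fin$ term, the mixed $\sF_i^\vee\sF_\fin$ terms, and the leg-leg terms) differs between the two recipes. You would need to compute $F$ explicitly and find it splits, as in \eqref{eq:CKM-vs-our-half-discrepancy}, into a $\sC(\xi_\fin,\xi)$-type piece plus leg-leg cross terms. The paper then needs two genuinely nontrivial lemmas: Lemma~\ref{lem:no-sign-from-leg-leg-part} (the leg-leg piece has no $\sA$-fixed part, via a first-two-coordinates sign argument) and Lemma~\ref{lem:NP-sign-from-our-half} (the $\sA$-fixed rank of the remaining piece has the parity of $\NP_4$, via counting boxes in the diagonal slabs $S_\pm$ along $(1,1,1,1)$). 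Your proposed ``additivity across the leg decomposition'' is not a substitute for this: the quadratic terms do not decompose additively in the legs, $\NP_4$ is defined by a box count that is not obviously additive, and the cross-terms between legs and the finite part $\xi_\fin$ are exactly where the subtlety lies. Calling the combinatorial matching ``essentially tautological'' is the place where a real argument is being waved away.
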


In the case of solid partitions {\it without} legs, i.e.
$\Hilb(\bC^4)$, the lhs of \eqref{eq:CKM-vs-our-half} is known to be
equal to $1/\sqrt{\se_\sA}(\bE)\big|_\xi$ by explicit computation with
a global Oh--Thomas presentation \cite{Kool}, and hence
Proposition~\ref{prop:CKM-vs-our-half} proves our main conjecture. It
would be interesting to understand the geometric interpretation, if
any, of our $\sV_{1/2}$ in Kool and Rennemo's setup.

\subsubsection{}

In \S\ref{sec:1-leg}, we also give some compelling evidence for the
correctness of our half $\bE_{1/2}$ in the case of the PT vertex with
one non-trivial leg. In this setting, the PT moduli space may be
identified with a moduli space of {\it quasimaps}, and under this
identification there is a certain half of the quasimap obstruction
theory with geometric significance. Our explicit formula for
$\bE_{1/2}$ in Definition~\ref{def:canonical-half} was originally
motivated by this $1$-leg calculation.

Many new developments in the $1$-legged case have appeared very
recently \cite{Cao2023,Piazzalunga2023}, and it would be interesting
them to compare our results.

\subsubsection{}

Finally, in \S\ref{sec:explicit-computation}, we address the problem
of explicitly computing the symmetrized Behrend--Fantechi virtual
cycle \eqref{eq:PT-fixed-locus-BF-cycle} on components of PT fixed
loci. This is a necessary step for computation of terms in the vertex
using Conjecture~\ref{conj:DT-PT-Ovir}. Using properties of the quiver
representations $\bar M$ and quiver Grassmannians $\Gr(\bar M)$
appearing in Proposition~\ref{prop:fixed-loci}, we give
characterizations in special cases of whether this virtual cycle is:
\begin{enumerate}
\item (\S\ref{sec:localizable-fixed-loci}) amenable to a further
  virtual equivariant localization, with isolated fixed points;
\item (\S\ref{sec:unobstructed-fixed-loci}) unobstructed, in the sense
  that it equals $\hat\cO \coloneqq \cO \otimes \det(\cT)^{-1/2}$.
\end{enumerate}
Both cases significantly simplify integration over $\Gr(\bar M)$ with
this virtual cycle. In fact, case (i) is already sufficient for the
low-degree computations of Proposition~\ref{prop:DT-PT-check}.

\subsection{Acknowledgements}

This project originated from many inspiring discussions with Martijn
Kool, Sergej Monavari, and Reinier Schmiermann, and was supported by
the Simons Collaboration on Special Holonomy in Geometry, Analysis and
Physics.

\section{Fixed loci}
\label{sec:fixed-loci}

\subsection{Quiver Grassmannians}

\subsubsection{}

\begin{definition}
  Let $Q$ be a connected quiver and let $M$ be a representation of $Q$
  in finite-dimensional $\bC$-vector spaces. Write $v \in Q$ (resp.
  $[v \to w] \in Q$) to mean that $Q$ has a node $v$ (resp. edge from
  $v$ to $w$), and similarly let $M_v$ (resp. $M_{v \to w}$) denote
  the vector spaces (resp. linear maps) in $M$. Let
  $\dim M \coloneqq (\dim M_v)_v$ denote the dimension vector. Both
  $Q$ and $\dim M$ are allowed to be infinite.

  For a finite dimension vector $\vec e$, in complete analogy with the
  classical Grassmannian, let
  \[ \Gr_{\vec e}(M) \coloneqq \{N \subset M : \dim N = \vec e\} \]
  be the {\it quiver Grassmannian} of quiver sub-representations
  $N \subset M$. It is manifestly a closed subscheme of products of
  classical Grassmannians $\prod_v \Gr_{e_v}(M_v)$, and is therefore a
  projective scheme. We refer to $M$ as the {\it ambient quiver
    representation}. For short, write
  \[ \Gr(M) \coloneqq \bigsqcup_{\vec e} \Gr_{\vec e}(M). \]
  Let $\scN$ and $\scM$ denote the universal quiver representations on
  $\Gr_{\vec e}(M)$, corresponding to $N$ and $M$ respectively.
\end{definition}

\subsubsection{}

Quiver Grassmannians also have a GIT presentation
\begin{equation} \label{eq:quiver-grassmannian-GIT-presentation}
  \Gr_{\vec e}(M) = Z(R_{\vec e, M}) \mathbin{\Big/\mkern-9mu\Big/}_{\!\!\!\theta}\; \GL(\vec e)
\end{equation}
where $\GL(\vec e) \coloneqq \prod_v \GL(e_v)$, the prequotient is the
zero locus of
\begin{align*}
  R_{\vec e, M}\colon \begin{array}{c} \bigoplus_{v \to w} \Hom(\bC^{e_v}, \bC^{e_w}) \\ \oplus \bigoplus_v \Hom(\bC^{e_v}, M_v)\end{array} &\to \bigoplus_{v \to w} \Hom(\bC^{e_v}, M_w) \\
  (N, \phi) &\mapsto (\phi_w \circ N_{v \to w} - M_{v \to w} \circ \phi_v)_{v \to w},
\end{align*}
and the GIT stability condition $\theta$ is chosen so that all
$\phi_v$ are injective \cite[Lemma 2]{Caldero2008}. Clearly
$R_{\vec e, M}$ is equivariant for the action $\alpha$ of
$\GL(\vec e)$.

\begin{proposition}
  $\Gr_{\vec e}(M)$ has a perfect obstruction theory given by
  $(\bE^\bullet[1])^\vee$ where
  \begin{equation} \label{eq:quiver-grassmannian-obstruction-theory}
    \bE^\bullet \coloneqq \bigg[\bigoplus_v \Hom(\scN_v, \scN_v) \xrightarrow{d\alpha} \begin{array}{c} \bigoplus_{v \to w} \Hom(\scN_v, \scN_w) \\ \oplus \bigoplus_v \Hom(\scN_v, \scM_v)\end{array} \xrightarrow{dR_{\vec e,M}} \bigoplus_{v \to w} \Hom(\scN_v, \scM_w)\bigg].
  \end{equation}
\end{proposition}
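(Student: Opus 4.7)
The plan is to exhibit $\bE^\bullet$ as the descent, along the GIT presentation~\eqref{eq:quiver-grassmannian-GIT-presentation}, of the standard tangent complex of the derived quotient stack $[Z(R_{\vec e, M})/\GL(\vec e)]$, and then to invoke $\theta$-stability to kill the unwanted degree and obtain a POT of the correct amplitude. The general template is that whenever $Y$ is a smooth affine $G$-scheme and $R\colon Y \to W$ is a $G$-equivariant morphism to a linear representation, the derived tangent complex of $[Z(R)/G]$ reads
\[ \bigl[\lie g \otimes \cO \xrightarrow{d\alpha} T_Y \xrightarrow{dR} W\bigr]\big|_{Z(R)} \]
with $d\alpha$ the infinitesimal $G$-action and $dR$ the differential of $R$.

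Next I would specialize and identify each piece concretely. The trivial bundle $\bC^{e_v}$ on the GIT prequotient descends to the tautological sub-representation $\scN_v$, and the constant $M_v$ becomes $\scM_v$. Hence $\lie g \otimes \cO = \bigoplus_v \Hom(\bC^{e_v}, \bC^{e_v})$ descends to $\bigoplus_v \Hom(\scN_v, \scN_v)$, the tangent $T_Y$ of the ambient affine space descends to the middle term of~\eqref{eq:quiver-grassmannian-obstruction-theory}, and the codomain of $R_{\vec e, M}$ descends to $\bigoplus_{v\to w}\Hom(\scN_v, \scM_w)$. Evaluating $d\alpha$ and $dR_{\vec e, M}$ at the tautological data $(N, \phi)$ then yields exactly the two arrows in the statement.

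To conclude, I would verify perfectness using the stability condition: on a $\theta$-stable point every $\phi_v$ is injective, so any $g \in \GL(\vec e)$ with $g\cdot(N, \phi) = (N, \phi)$ obeys $\phi_v g_v = \phi_v$ and must be the identity. Hence $\GL(\vec e)$ acts freely on the stable locus, $d\alpha$ is a fiberwise injection of vector bundles, and the corresponding term of $\bE^\bullet$ contributes no cohomology. Dualizing and shifting then gives a perfect complex of amplitude $[-1,0]$, the desired POT. The one delicate step is checking that the derived tangent complex of the prequotient stack really descends to a Behrend--Fantechi obstruction theory on the classical GIT quotient, with the appropriate morphism to its cotangent complex; this is handled by the standard formalism for quasi-smooth derived stacks together with the fiber sequence $[Z(R)/\GL(\vec e)] \to B\GL(\vec e)$, and constitutes the main (purely technical) obstacle in an otherwise functorial bookkeeping argument.
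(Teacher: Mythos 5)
Your proposal is correct and takes essentially the same route as the paper: both exhibit $\Gr_{\vec e}(M)$ as an open locus in the classical truncation of the derived quotient stack $[Z^{\der}(R_{\vec e,M})/\GL(\vec e)]$, read off $(\bE^\bullet[1])^\vee$ as its cotangent complex, and invoke the standard formalism for extracting a Behrend--Fantechi perfect obstruction theory from a quasi-smooth derived structure. The paper simply cites Graber--Pandharipande and Sch\"urg--To\"en--Vezzosi for the ``standard considerations,'' whereas you unpack those citations by spelling out the descent of each term of $\bE^\bullet$ and the role of $\theta$-stability (freeness of the $\GL(\vec e)$-action makes $d\alpha$ a fiberwise injection, dropping the amplitude to $[-1,0]$).
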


\begin{proof}
  The GIT quotient is a zero locus inside a smooth scheme, so the
  conclusion follows from the basic case of \cite[\S 2]{Graber1999}.
  In more modern language, there are inclusions
  \[ \Gr_{\vec e}(M) \hookrightarrow [Z(R_{\vec e, M})/\GL(\vec e)] \hookrightarrow [Z^{\der}(R_{\vec e, M})/\GL(\vec e)], \]
  first as an open locus in the ambient stack, and then as the
  classical truncation of the corresponding {\it derived} stack where
  $Z^{\der}$ denotes derived zero locus. The cotangent complex of the
  derived stack is manifestly $(\bE^\bullet[1])^\vee$ and is perfect
  in amplitude $[-1, 1]$. By standard considerations, see e.g.
  \cite[\S 1]{Schuerg2015}, its restriction to $\Gr_{\vec e}(M)$ is a
  perfect obstruction theory.
\end{proof}

\subsubsection{}

A morphism $\phi \in \Hom_Q(M, N)$ of quiver representations is a
collection of linear maps $(\phi_v)_v \in \prod_v \Hom(M_v, N_v)$
intertwining the linear maps in $M$ and $N$. Using the GIT
presentation \eqref{eq:quiver-grassmannian-GIT-presentation}, by the
same argument as for the classical Grassmannian \cite[Proposition
6]{Caldero2008},
\[ T_N \Gr_{\vec e}(M) = \Hom_Q(N, M/N) \]
and smoothness at $N \in \Gr_{\vec e}(M)$ is measured by the vanishing
of $\Ext^1_Q(N, M/N)$.

\subsubsection{}

\begin{definition} \label{def:ambient-quiver-notation}
  We introduce some notation for the ambient quiver representations
  which will appear later from PT fixed loci. For $i \in \{1,2,3,4\}$,
  let $\bC_i \coloneqq \bC e_i$ for short, and for a subset
  $S \subset \{1,2,3,4\}$ with $|S| > 1$, let
  \[ \bar \bC_S \coloneqq \bigoplus_{i \in S} \bC e_i \Big/ \bC \sum_{i \in S} e_i. \]
  Such vector spaces will form the nodes of the ambient quiver
  representation $\bar M$, and will be connected by the obvious
  projection and inclusion maps
  \begin{align*}
    \pi_{S,S'}\colon \bar \bC_S &\twoheadrightarrow \bar \bC_{S'}, \qquad S' \subset S \\
    \iota_j\colon \bC_j &\hookrightarrow \bar \bC_S, \qquad j \in S.
  \end{align*}
\end{definition}

Our quiver $Q$ and $\bar M$ will generally be infinite, but we will
only consider sub-representations $\bar N \subset \bar M$ of finite
dimension vector. We only draw the relevant part of $\bar M$ where the
$\bar N$ under consideration is non-empty.

\subsubsection{}

It turns out that {\it any} projective variety can be written as a
quiver Grassmannian \cite{Reineke2013} (using an acyclic quiver with
at most three vertices!) so no special properties should be expected
from $\Gr_{\vec e}(M)$ unless $M$ or $\vec e$ is very special. In
particular, the quiver Grassmannians appearing later can be be
arbitrarily singular, non-reduced and not of pure dimension.

\begin{example} \label{ex:quiver-grassmannian-singular}
  Following the notation of
  Definition~\ref{def:ambient-quiver-notation}, the quiver
  sub-representations $\bar N \subset \bar M$ given by
  \[ \begin{tikzcd}[column sep=1em, row sep=1em]
      0 \ar[hookrightarrow]{rr} && \bar \bC_{13} \\
      & \bC \ar[hookrightarrow]{rr} && \bar \bC_{123} \\
      \bC \ar{uu} \ar{ur} \ar[hookrightarrow]{rr}{i} && \bar \bC_{1234} \ar{ur} \ar{uu}
    \end{tikzcd} \]
  form a component $\Gr_{(1,1,0)}(\bar M)$ isomorphic to the union of two
  $\bP^1$ at a point where $i(\bC) = \bC_4$. Labeling the nodes of $\bar N$
  as $\bar N_1 \to \bar N_2$ and $\bar N_3 = 0$,
  \eqref{eq:quiver-grassmannian-obstruction-theory} gives
  \[ \cT^\vir \Gr_{(1,1,0)}(M) = \scN_1^{-1} \scN_2 - 2 + 2\scN_2^{-1} \in K^\circ(\Gr_{(1,1,0)}(M)). \]
\end{example}

\subsection{For the full torus}
\label{sec:PT-full-fixed-loci}

\subsubsection{}

Fix a toric chart $\bC^4 \subset X$ and assume its coordinates
$x_1, x_2, x_3, x_4$ have $\sT$-weight $t_1, t_2, t_3, t_4$. In this
subsection, we characterize the restriction
$s\colon \cO_{\bC^4} \to \cF_{\bC^4}$ of a $\sT$-fixed stable pair to
$\bC^4$. The first step is completely analogous to the $3$-fold case
\cite[\S 2]{Pandharipande2009a}. The CM support curve $C$ of this PT
stable pair is cut out by a monomial ideal
\[ \cI_C \subset \bC[x_1, x_2, x_3, x_4] \]
which is a $\sT$-fixed point in DT moduli space and so corresponds to
a solid partition with legs. Namely, treating indices cyclically
modulo $4$, the $i$-th leg $\pi^i$ is the plane partition in
coordinates $x_{i+1}, x_{i+2}, x_{i+3}$ such that
\[ (\cI_C)_{x_i} = \frac{\bC[x_1, x_2, x_3, x_4]_{x_i}}{\pi^i[x_{i+1}, x_{i+2}, x_{i+3}] \cdot \bC[x_1, x_2, x_3, x_4]_{x_i}}. \]
Note that not all the $\pi^i$ can be empty since $C$ has dimension
$1$. Set
\[ M \coloneqq \bigoplus_{i=1}^4 M_i, \qquad M_i \coloneqq (\cO_{\bC^4})_{x_i}/(\cI_C)_{x_i}. \]
In the language of box configurations, where a $\sT$-invariant
$\cO_{\bC^4}$-module is described in terms of its $\sT$-weights in the
weight space $\bZ^4$ of $\sT$, the module $M_i$ is an infinite
cylinder along the $x_i$ axis with cross-section $\pi^i$, extending in
both the positive and negative $x_i$ directions.

\begin{proposition}[{\cite[Proposition 1]{Pandharipande2009a}}]
  $\sT$-fixed stable pairs on $\bC^4$ correspond to finitely-generated
  $\sT$-invariant $\cO_{\bC^4}$-submodules
  \[ Z \subset \bar M \coloneqq M/\inner{(1,1,1,1)}. \]
\end{proposition}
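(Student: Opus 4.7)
The approach is to directly adapt the $3$-fold argument of \cite{Pandharipande2009a} to four variables; no essentially new ideas appear, so the plan is to carefully track the role of the extra coordinate axis. The correspondence will send a stable pair $[\cO_{\bC^4} \xrightarrow{s} \cF]$ to $Z \coloneqq \coker s \subset \bar M$, and an inverse construction will produce $\cF$ as the preimage of $Z$ under the quotient $M \twoheadrightarrow \bar M$.

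First I would analyze the structure of $\cF$ away from the origin. On each toric open $\{x_i \neq 0\}$, PT stability forces $s$ to be an isomorphism onto its image and $\cF$ to be the Cohen--Macaulay part of the support, so $\cF_{x_i} \cong M_i$ with $s_{x_i}$ identified with the quotient $\cO_{\bC^4} \twoheadrightarrow \cO_{\bC^4}/\cI_C$ sending $1$ to $1$. Next I would use purity of $\cF$: since $\cF$ is pure $1$-dimensional with support the union of the (at most four) coordinate axes of $C$, no local section of $\cF$ can vanish simultaneously on all generic points of $C$, so the product of localization maps
\[ \cF \hookrightarrow \bigoplus_{i=1}^4 \cF_{x_i} = \bigoplus_{i=1}^4 M_i = M \]
is injective. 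Under this embedding, the previous step identifies $s(1) \in \cF \subset M$ with $(1,1,1,1)$, and hence $\coker s = \cF / \cO_{\bC^4}\cdot s(1)$ naturally sits inside $\bar M = M/\langle (1,1,1,1)\rangle$ as a $\sT$-invariant submodule. Finite generation of this submodule follows from finite generation of $\cF$, which itself follows from PT stability (the $0$-dimensional part $\coker s$ has finite length, and $\cO_C$ differs from $M$ only at the origin).

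For the inverse direction, given a finitely generated $\sT$-invariant $Z \subset \bar M$, define $\cF$ as the preimage of $Z$ under $M \twoheadrightarrow \bar M$; it is automatically a coherent, $\sT$-invariant, $1$-dimensional $\cO_{\bC^4}$-submodule of $M$ containing $(1,1,1,1)$. Set $s\colon \cO_{\bC^4} \to \cF$ by $1 \mapsto (1,1,1,1)$. I would then verify PT stability: purity is inherited from the fact that each $M_i$ is pure of dimension $1$ on its axis, and the cokernel $\cF/\mathrm{im}(s) \cong Z$ is a submodule of $\bar M$, which by construction is supported at the origin, hence $0$-dimensional. Finally, checking that the two constructions are mutually inverse is a direct computation using that $(1,1,1,1)$ is the unique $\sT$-invariant lift of the natural section once an embedding $\cF \hookrightarrow M$ has been fixed.

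The main technical obstacle is the purity/injectivity step: one must confirm that PT stability on a toric $4$-fold really does forbid any section of $\cF$ supported strictly on a proper subset of the axes of $C$, which in dimension $4$ requires ruling out both embedded $0$-dimensional components and the subtler possibility that $\cF$ has an associated point of dimension $0$ that is invisible on every axis. This follows formally from $\cF$ being pure of dimension $1$ (no lower-dimensional associated primes), but I would state it explicitly because the $3$-fold reference handles it only implicitly. Everything else is bookkeeping with monomial ideals and localization, and the proof concludes by assembling the two constructions into mutually inverse bijections.
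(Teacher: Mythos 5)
The paper itself gives no proof of this proposition: it cites \cite[Proposition 1]{Pandharipande2009a} and simply remarks that ``the first step is completely analogous to the $3$-fold case.'' Your reconstruction follows exactly the Pandharipande--Thomas argument — use purity to embed $\cF$ into $M = \bigoplus_i M_i$ via the localization maps, identify $s(1)$ with $(1,1,1,1)$, pass to the cokernel inside $\bar M$, and invert by taking preimages — so it is the same approach and is essentially correct. The only slight wobble in wording is the phrase ``forbid any section of $\cF$ supported strictly on a proper subset of the axes of $C$'': sections supported on a proper subset of the axes are perfectly allowed; what purity forbids is a section supported \emph{only at the origin} (a $0$-dimensional associated prime), and you do in fact state this correctly in the next sentence, so the proof stands.
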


\subsubsection{}
\label{sec:PT-full-quiver}

The $\sT$-fixed PT locus on $\bC^4$ is therefore the space of these
submodules $Z \subset \bar M$. Attempting to characterize $Z$ in the
language of the {\it labeled} box configurations of the $3$-fold case
will not go well due to new phenomena specific to $4$-folds; see
Examples~\ref{ex:PT-full-fixed-locus-non-labelable} and
\ref{ex:PT-full-fixed-locus-Quot-scheme} below. Instead, observe that
the $\cO_{\bC^4}$-module $\bar M$ is equivalently a quiver
representation: nodes are $\sT$-weight spaces in $\bar M$, and
multiplication by coordinates $x_i$ induces linear maps between nodes.
Put differently, $\bar M$ has a $\bZ^4$-grading from $\sT$, and the
underlying quiver $Q$ has a vertex $v \in \bZ^4$ iff the graded piece
$\bar M_v$ is non-empty, and an edge $v \to w$ iff $t(w) = t(v)t_i$
for some $i$, where $t(v)$ is the $\sT$-weight corresponding to $v$.

As there is little risk of confusion, we use $\bar M$ to denote both
the module and the quiver representation. The quiver $Q$ and its
representation $\bar M$ are connected and acyclic but always infinite.
We use the notation of Definition~\ref{def:ambient-quiver-notation}
for nodes and edges of $\bar M$, so that $\bar M_v = \bar \bC_S$ is
spanned by the generators $e_i \in (M_i)_v$ for $i \in S$.

Clearly, finitely-generated $\sT$-invariant $\cO_{\bC^4}$-submodules
$Z \subset \bar M$ correspond to quiver sub-representations
\[ \bar N \in \Gr(\bar M) \]
with finite dimension vector. Hence $\sT$-fixed PT loci on $\bC^4$ are
quiver Grassmannians once one checks
(Proposition~\ref{prop:PT-fixed-locus-tangent-space}) that they have
the same Zariski tangent space, to rule out possible scheme-theoretic
thickening.

\subsubsection{}

\begin{remark}
  This quiver Grassmannian description of $\sT$-fixed PT loci works
  equally well in any dimension. For $3$-folds, we can match it with
  the previous combinatorial description \cite{Pandharipande2009a} in
  terms of type I, II, and III boxes. The quiver representation
  $\bar M$ has nodes which are either $\bC$ (locations of type I${}^-$
  or type II boxes) or $\bC^2$ (locations of type III boxes). In the
  quiver Grassmannian $\Gr(\bar M)$, positive-dimensional components
  can only occur by asking for $1$-dimensional subspaces
  $\bC \subset \bC^2$ (labeled type III boxes) in the quiver
  sub-representation $\bar N \subset \bar M$. Since all maps between
  the $\bC^2$ are isomorphisms, so are all maps between such $\bC$.
  Hence $\Gr_{\vec e}(\bar M)$ for any given dimension vector $\vec e$
  is isomorphic to a product of $\Gr(1, \bC^2) \cong \bP^1$, the
  connected components (unrestricted path components) of
  freely-varying $\bC \subset \bC^2$ in $\bar N$.
\end{remark}

\subsubsection{}

\begin{example} \label{ex:PT-full-fixed-locus-non-labelable}
  Take the legs $\pi^1, \pi^2, \pi^3, \pi^4$ to be
  \[ \includegraphics[scale=1.5]{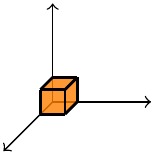} \quad \includegraphics[scale=1.5]{images/p1.pdf} \quad \includegraphics[scale=1.5]{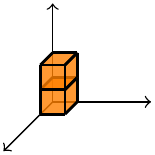} \quad \includegraphics[scale=1.5]{images/p2y.pdf} \]
  respectively, with axes ordered cyclically counterclockwise starting
  from the horizontal one. Then $\bar M$ is
  \[ \begin{tikzcd}[column sep={1.5em,between origins}, row sep={3.15em,between origins}]
    {} &&&&&& \bar \bC_{24} &&&&&& \bar \bC_{13} \\
    {} &&& \bC_4 \ar{urrr} &&&&&& \bar \bC_{1234} \ar{ulll} \ar{urrr} &&&&&& \bC_3 \ar{ulll} \\
    \rotatebox[origin=c]{35}{$\cdots$} \ar{urrr} &&&&&& \bC_4 \ar{ulll} \ar{urrr} && \bC_1 \ar{ur} && \bC_2 \ar{ul} && \bC_3 \ar{ulll} \ar{urrr} &&&&&& \rotatebox[origin=c]{-35}{$\cdots$} \ar{ulll} \\
    {} &&& \rotatebox[origin=c]{35}{$\cdots$} \ar{urrr} &&&& \rotatebox[origin=c]{64.5}{$\cdots$} \ar{ur} &&&& \rotatebox[origin=c]{-64.5}{$\cdots$} \ar{ul} &&&& \rotatebox[origin=c]{-35}{$\cdots$} \ar{ulll}
  \end{tikzcd} \]
  Let $\vec e$ be the dimension vector which is $1$ at the central
  $\bar \bC_{1234}$ node and $0$ elsewhere. Then
  \[ \Gr_{\vec e}(\bar M) = \{\bar N\} \]
  where $\bar N \subset \bar M$ is the sub-representation which is
  zero everywhere except at
  \[ \bC = \ker \pi_{1234,13} \cap \ker \pi_{1234,24} \subset \bar \bC_{1234}. \]
  This $\bC$ is generated by the vector $(1,-1,1,-1)$, which is
  notably not a standard basis vector nor equivalent to one modulo
  $(1,1,1,1)$. This is a $4$-fold phenomenon which does not occur for
  $3$-folds.
\end{example}

\subsubsection{}

\begin{example} \label{ex:PT-full-fixed-locus-Quot-scheme}
  Example~\ref{ex:quiver-grassmannian-singular} is a PT fixed locus
  for appropriate legs, and already demonstrates that, unlike for
  $3$-folds, $4$-fold PT fixed loci can be singular. In fact, in stark
  contrast with $3$-folds and in full accordance with Murphy's Law
  \cite{Vakil2006}, they can be {\it arbitrarily} singular. We show
  this by ``embedding'' $\sT$-fixed loci of
  $\Quot(\cO_{\bC^4}^{\oplus 3})$, which are known to have arbitrary
  singularities \cite[Theorem 2.1.1]{Schmiermann2021}, into our
  $\sT$-fixed PT loci as follows. View $\cO_{\bC^4}^{\oplus 3}$ as the
  representation $M^{\Quot}$ of the $4$-dimensional (positive) lattice
  quiver where each node is a $\bC^3$ and all edges are isomorphisms.
  Then an element
  \[ [s\colon \cO_{\bC^4}^{\oplus 3} \twoheadrightarrow \cF] \in \Quot(\cO_{\bC^4}^{\oplus 3})^\sT \]
  is equivalently a quiver sub-representation
  $N \coloneqq \ker(s) \subset M^{\Quot}$ of finite codimension. But,
  taking the four legs $\pi^1 = \pi^2 = \pi^3 = \pi^4$ to be
  sufficiently big, our $\bar M$ also contains the interesting portion
  of $M^{\Quot}$ where $N$ differs from $M^{\Quot}$. Hence $N$ is also
  a quiver sub-representation of our $\bar M$, and we are done.
\end{example}

\subsection{For the Calabi--Yau torus}
\label{sec:PT-CY-fixed-loci}

\subsubsection{}

Now suppose $s\colon \cO_X \to \cF$ is a $\sA$-fixed instead of
$\sT$-fixed stable pair. The support curve $C$ is a priori
only $\sA$-invariant, but since $[\cI_C]$ is a point in the DT moduli
space and
\begin{equation} \label{eq:DT-CY-vs-full-torus}
  \DT_{\beta, n}(X)^\sA = \DT_{\beta, n}(X)^\sT,
\end{equation}
it follows that $C$ is actually $\sT$-invariant \cite[Lemma
2.1]{Cao2020}. So, on $\bC^4 \subset X$, we continue to think about
$4$-tuples of plane partitions $(\pi^i)_{i=1}^4$ forming the legs of a
box configuration.

\subsubsection{}
\label{sec:PT-CY-quiver}

As before, let $M_i \coloneqq (\cO_{C_{\pi^i}})_{x_i}$ and
$M \coloneqq \bigoplus_{i=1}^4 M_i$, so that $\sA$-fixed stable pairs
correspond to finitely-generated $\sA$-invariant
$\cO_{\bC^4}$-submodules
\[ Z \subset \bar M^{\CY} \coloneqq M/\inner{(1,1,1,1)}. \]
Here, the superscript $\CY$ indicates that this module is only
$\bZ^3$-graded instead of $\bZ^4$-graded like $\bar M$. The
$\sA$-weight spaces of $\bar M^{\CY}$ form a quiver representation of
a quiver $Q^{\CY}$, and the desired submodules $Z$ are then quiver
sub-representations of $\bar M^{\CY}$ with finite dimension vector.

If $Q$ is the quiver underlying the fully $\bZ^4$-graded $\bar M$,
evidently $Q^{\CY}$ is the quotient of $Q$ by the equivalence relation
\[ v \sim w \iff t(v) \equiv t(w) \bmod{\kappa} \]
on nodes of $Q$, and similarly the nodes of $\bar M^{\CY}$ are given
by
\[ \bar M^\CY_v = \bigoplus_{\substack{w \in Q\\v \sim w}} \bar M_w. \]
Edges/maps are the induced ones. Note that $Q^{\CY}$ may not be
acyclic anymore, although $\bar M^{\CY}$ is still acyclic in the sense
that the composition of all maps in any cycle is always zero.

\subsubsection{}
\label{sec:Cstar-action-on-CY-fixed-loci}

One can put a $\bC^\times$-action on $\Gr(\bar M^{\CY})$ by
reintroducing the grading by $\kappa$, namely $s \in \bC^\times$
acts by
\[ s \cdot \bar M^{\CY}_v \coloneqq \bigoplus_{v \sim w} s^{\deg_\kappa t(w)} \bar M_w \]
with induced action on the maps $\bar M_{v \to w}^{\CY}$. Clearly the
discrepancy between $\Gr(\bar M^{\CY})$ and
$\Gr(\bar M^{\CY})^{\bC^\times} = \Gr(\bar M)$ is the discrepancy
between $\sA$-fixed and $\sT$-fixed PT loci. Since torus-fixed loci of
smooth schemes are themselves smooth, singularities in $\Gr(\bar M)$
(Example~\ref{ex:PT-full-fixed-locus-Quot-scheme}) induce
singularities in $\Gr(\bar M^{\CY})$, so $\sA$-fixed PT loci are in
general singular too.

For up to two non-trivial legs, $\sA$-fixed and $\sT$-fixed loci in
the PT moduli space agree and are just isolated points
\cite[Propositions 2.5, 2.6]{Cao2020}, but in general and in contrast
to \eqref{eq:DT-CY-vs-full-torus},
\[ \PT_{\beta,n}(X)^\sA \supsetneq \PT_{\beta,n}(X)^\sT. \]
The following Example~\ref{ex:PT-CY-vs-full-fixed-locus} of this
phenomenon has four non-trivial legs and is in some sense the
next-to-minimal example.

\subsubsection{}

\begin{example} \label{ex:PT-CY-vs-full-fixed-locus}
  Take the legs $\pi^1, \pi^2, \pi^3, \pi^4$ to be
  \[ \includegraphics[scale=1.5]{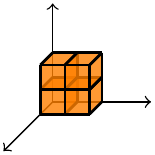} \quad \includegraphics[scale=1.5]{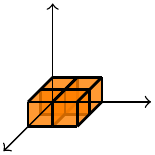} \quad \includegraphics[scale=1.5]{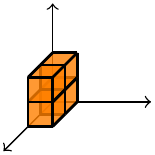} \quad \includegraphics[scale=1.5]{images/p1.pdf} \]
  respectively, with axes ordered cyclically counterclockwise. Then
  \[ \bar M =
    \begin{tikzcd}[column sep=1em, row sep=1em, baseline={([yshift=3.5em]origin.base)}]
      & \bar\bC_{123} \ar{rr} && \bar\bC_{123} \\
      \bar\bC_{123} \ar{rr} \ar{ur} && \bar\bC_{123} \ar{ur} \\
      & \bar\bC_{123} \ar{uu} \ar{rr} && \bar\bC_{123} \ar{uu} \\
      |[alias=origin]| \bar\bC_{1234} \ar{ur} \ar{uu} \ar{rr} && \bar\bC_{123} \ar{uu} \ar{ur} \\
      & \bC_4 \ar{ul}
    \end{tikzcd}, \qquad
    \bar M^{\CY} =
    \begin{tikzcd}[column sep=1em, row sep=1em, baseline={([yshift=3.5em]origin.base)}]
      & \bar\bC_{123} \ar{rr}{\begin{psmallmatrix} 1\\ 0\end{psmallmatrix}} && \bar\bC_{123} \oplus \bC_4 \ar[bend left=20]{dddlll}{(0, \iota_4)} \\
      \bar\bC_{123} \ar{rr} \ar{ur} && \bar\bC_{123} \ar{ur}{\begin{psmallmatrix} 1\\ 0\end{psmallmatrix}} \\
      & \bar\bC_{123} \ar{uu} \ar{rr} && \bar\bC_{123} \ar[swap]{uu}{\begin{psmallmatrix} 1\\ 0\end{psmallmatrix}} \\
      |[alias=origin]| \bar\bC_{1234} \ar{ur} \ar{uu} \ar{rr} && \bar\bC_{123} \ar{uu} \ar{ur}
    \end{tikzcd} \]
  There is a $\bP^2$ component in $\Gr(\bar M^{\CY})$ consisting of
  quiver sub-representations
  \[ \bar N^{\CY} =
    \begin{tikzcd}[column sep=1em, row sep=1em, baseline={([yshift=3.5em]origin.base)}]
      & 0 \ar{rr} && \bC \ar[bend left=20]{dddlll} \\
      0 \ar{rr} \ar{ur} && 0 \ar{ur} \\
      & 0 \ar{uu} \ar{rr} && 0 \ar{uu} \\
      |[alias=origin]| \iota_4(\bC_4) \ar{ur} \ar{uu} \ar{rr} && 0 \ar{uu} \ar{ur}
    \end{tikzcd}
    \subset \bar M^{\CY} \]
  where the top right $\bC \subset \bar\bC_{123} \oplus \bC_4$ is an
  arbitrary subspace. The two $\sT$-fixed loci are
  \[ \bar N^{(1)} =
    \begin{tikzcd}[column sep=1em, row sep=1em, baseline={([yshift=3.5em]origin.base)}]
      & 0 \ar{rr} && 0 \\
      0 \ar{rr} \ar{ur} && 0 \ar{ur} \\
      & 0 \ar{uu} \ar{rr} && 0 \ar{uu} \\
      |[alias=origin]| \iota_4(\bC_4) \ar{ur} \ar{uu} \ar{rr} && 0 \ar{uu} \ar{ur} \\
      & \bC_4 \ar{ul}
    \end{tikzcd}, \qquad
    \bar N^{(2)} =
    \begin{tikzcd}[column sep=1em, row sep=1em, baseline={([yshift=3.5em]origin.base)}]
      & 0 \ar{rr} && \bC \\
      0 \ar{rr} \ar{ur} && 0 \ar{ur} \\
      & 0 \ar{uu} \ar{rr} && 0 \ar[swap]{uu} \\
      |[alias=origin]| \iota_4(\bC_4) \ar{ur} \ar{uu} \ar{rr} && 0 \ar{uu} \ar{ur}
    \end{tikzcd} \subset \bar M \]
  corresponding to a point and a $\bP^1$ where the top right
  $\bC \subset \bar \bC_{123}$ is an arbitrary subspace.
\end{example}

\subsubsection{}

\begin{proposition}[{\cite[Lemma 3]{Pandharipande2009a}}] \label{prop:PT-fixed-locus-tangent-space}
  The Zariski tangent space
  $\Ext^0_{\bC^4}(I_{\bC^4}, \cF_{\bC^4})^\sA$ to the $\sA$-fixed
  locus on $\bC^4$ is equal to
  $\Hom_{\bC^4}(\cQ_{\bC^4}, M/\cF_{\bC^4})^\sA$.
\end{proposition}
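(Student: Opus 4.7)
The plan is to identify both sides with the Zariski tangent space of the $\sA$-fixed PT locus at $[I]$, then construct an explicit isomorphism between them. By the standard deformation theory of PT pairs \cite{Pandharipande2009}, the full Zariski tangent space to $\PT_{\beta,n}(X)$ at $[I]$ is $\Ext^0_{\bC^4}(I, \cF)$, so taking $\sA$-invariants gives the tangent to the $\sA$-fixed locus. On the other hand, $\Hom_{\bC^4}(\cQ, M/\cF)^\sA$ is by construction the tangent space at $[\cQ]$ to the quiver Grassmannian $\Gr(\bar M^{\CY})$ appearing in \S\ref{sec:PT-CY-fixed-loci}, using the identification $\bar M/\cQ = M/\cF$. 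Matching these two descriptions is what must be proved.

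I would construct a natural comparison map $\alpha \colon \Hom(\cQ, M/\cF) \to \Ext^0(I, \cF)$ by explicit first-order deformation: given $\phi$, pick any set-theoretic $\bC$-linear lift $\tilde\phi \colon \cQ \to \bar M$, form the $\bC[\epsilon]$-submodule $\cQ_\epsilon$ generated by $\{q + \epsilon \tilde\phi(q) : q \in \cQ\}$ inside $\bar M \otimes \bC[\epsilon]$, and let $\cF_\epsilon \subset M \otimes \bC[\epsilon]$ be its preimage under $M_\epsilon \twoheadrightarrow \bar M_\epsilon$. The $\cO_{\bC^4}$-linearity of $\phi$ ensures $\cQ_\epsilon$ is a submodule independent of the lift (modulo its trivial deformation). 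Together with the canonical $s_\epsilon \colon \cO_{X,\epsilon} \twoheadrightarrow \cO_C \otimes \bC[\epsilon] \hookrightarrow \cF_\epsilon$, this yields a first-order deformation and hence $\alpha(\phi)$. Injectivity is immediate, as distinct $\phi$ produce distinct submodules of $M \otimes \bC[\epsilon]$.

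The main obstacle is surjectivity of $\alpha$ on $\sA$-invariants, which reduces to the rigidity of the support curve $C$ under $\sA$-fixed deformations. Given an $\sA$-fixed tangent vector $(\cF_\epsilon, s_\epsilon)$, the ideal $\cI_{C_\epsilon} = \ker s_\epsilon$ is $\sA$-fixed, hence by \eqref{eq:DT-CY-vs-full-torus} also $\sT$-fixed. But a $\sT$-equivariant flat deformation of a monomial ideal in $\cO_{\bC^4}$ must be trivial, since every $\sT$-weight space of $\cO_{\bC^4}$ is at most one-dimensional and therefore offers no room to deform. Thus $\cI_{C_\epsilon} = \cI_C \otimes \bC[\epsilon]$, and the support curve $C$, its structure sheaf $\cO_C$, and the module $M = \bigoplus_i (\cO_C)_{x_i}$ are all rigid. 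Consequently $\cF_\epsilon$ canonically embeds in $M \otimes \bC[\epsilon]$ as a $\bC[\epsilon]$-submodule containing $\cO_C \otimes \bC[\epsilon]$, its quotient $\cQ_\epsilon \coloneqq \cF_\epsilon/\cO_C[\epsilon] \subset \bar M \otimes \bC[\epsilon]$ is a first-order deformation inside the quiver Grassmannian, and the standard tangent formula identifies it with a unique $\phi \in \Hom(\cQ, M/\cF)^\sA$ mapping to $(\cF_\epsilon, s_\epsilon)$ under $\alpha$. Once this rigidity step is in hand, the whole problem reduces transparently to tracking submodules of $\bar M$, which is precisely what the quiver Grassmannian computes.
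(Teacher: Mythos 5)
The paper itself does not prove this proposition; it is cited directly from \cite[Lemma 3]{Pandharipande2009a}, whose argument is homological, chaining long exact sequences from the distinguished triangle $\cI_C \to I \to \cQ[-1]$ and the short exact sequences $0 \to \cO_C \to \cF \to \cQ \to 0$, $0 \to \cF \to M \to M/\cF \to 0$, and using vanishing of the invariant parts of several intermediate $\Ext$ groups. You instead build an explicit comparison map $\alpha$ between the two sides via first-order deformations, which is a genuinely different, more geometric route. Your diagnosis of the crux --- rigidity of the CM support curve $C$ under $\sA$-fixed deformation --- is exactly right, and your observation that $\Hom(\cI_C, \cO_C)^\sT = 0$ because all $\sT$-weight spaces of $\cO_{\bC^4}$ are one-dimensional is correct and is indeed what underlies \eqref{eq:DT-CY-vs-full-torus}.

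There is, however, a real gap in the surjectivity step. You define $\cI_{C_\epsilon} \coloneqq \ker s_\epsilon$ and then invoke rigidity of $[\cI_C]$ in the DT moduli space, but this only applies if $\cI_{C_\epsilon}$ is flat over $\bC[\epsilon]$ --- i.e., if it is a bona fide first-order deformation of $\cI_C$ in the Hilbert scheme. Flatness of the kernel of a morphism of flat $\bC[\epsilon]$-modules is not automatic (it is obstructed by a Tor term controlled by the cokernel $\cQ_\epsilon$), and you assume it tacitly in the phrase ``a $\sT$-equivariant flat deformation of a monomial ideal must be trivial'' without verifying that the deformation at hand is flat. Relatedly, the ``canonical'' embedding $\cF_\epsilon \hookrightarrow M \otimes \bC[\epsilon]$ used afterward requires purity of $\cF_\epsilon$ as an $\cO_X[\epsilon]$-module, which deserves an argument (it does follow from flatness of $\cF_\epsilon$ together with purity of its fiber, but this should be said). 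Both gaps are fillable, but they are precisely the subtleties that the exact-sequence proof in \cite[Lemma 3]{Pandharipande2009a} sidesteps by never leaving the derived category, which is one reason that proof is structured the way it is.
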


Clearly $M/\cF_{\bC^4} \cong \bar M/\cQ_{\bC^4}$, and as an
$\sA$-module $\bar M$ corresponds to the quiver $\bar M^\CY$.
Similarly $\cQ_{\bC^4}$ corresponds to a quiver sub-representation
$\bar N^\CY \subset \bar M^\CY$. It follows that
\[ \Hom_{\bC^4}(\cQ_{\bC^4}, M/\cF_{\bC^4})^\sA = \Hom_{Q^\CY}(\bar N^\CY, \bar M^\CY/\bar N^\CY) = T_{\bar N^\CY}\Gr(\bar M^\CY). \]
Hence $\sA$-fixed PT loci on $\bC^4$ really are quiver Grassmannians,
not some thickening thereof. This is therefore true for the
$\sT$-fixed PT loci of \S\ref{sec:PT-full-fixed-loci} as well. The
local-to-global isomorphism
\[ \Ext^0(I, \cF)^\sA \cong \bigoplus_\alpha \Ext^0\big(I\big|_{U_\alpha}, \cF\big|_{U_\alpha}\big)^\sA \]
for $\sA$-fixed PT pairs \cite[Equation (3.5)]{Pandharipande2009a}
therefore implies that $\sA$-fixed PT loci on $X$ are products of
quiver Grassmannians as well.

\section{Obstruction theory and halves}

\subsection{Canonical half}
\label{sec:canonical-half}

\subsubsection{}

Following \S\ref{sec:PT-full-fixed-loci}, fix a connected component $Z
= Z_{\bC^4} \times \cdots \subset \PT_{\beta,n}(X)^\sA$ and let
$[\cO_{\bC^4} \xrightarrow{s} \cF_{\bC^4}]$ be the restriction of a PT
pair in $Z$ to the chart $\bC^4 \subset X$. Let $\bC^\times$ be
another copy of the remaining $\bC_\kappa^\times$-action on
$Z_{\bC^4}$, as in \S\ref{sec:Cstar-action-on-CY-fixed-loci}, with
weight denoted by $\rho$ instead of $\kappa$. We require this
$\bC^\times$ because later $\bC_\kappa^\times$ will disappear in the
Calabi--Yau specialization of \eqref{eq:Vhalf-CY-specialization}, but
we still want to perform a sort of $\bC^\times$-equivariant
localization on $Z_{\bC^4}$.

The equivariant character of $\cF_{\bC^4}$ can be written in terms of
universal bundles on $Z_{\bC^4}$, i.e.
\[ \sF \coloneqq \scF|_{Z \times \bC^4} \in K^\circ_{\sT \times \bC^\times}(Z_{\bC^4})_{\loc} \]
where we implicitly identified
$K_{\sT}(\bC^4)_\loc \cong \bk_{\sT,\loc}$. Localization is required
here because $\cF_{\bC^4}$ contains a contribution from $\cO_C$ where
$C \subset \bC^4$ is the CM support curve. While $\sF$ already carries
a canonical $\bC^\times_\kappa$-equivariant structure, we are free to
pick arbitrary non-trivial $\bC^\times$-equivariant structures on
the universal bundles of $Z_{\bC^4}$.

\begin{definition} \label{def:normalized-character}
  The {\it normalized character} is
  \[ \sF_\fin \coloneqq \sF - \sum_i \frac{\sF_i}{1 - t_i} \in K^\circ_{\sT \times \bC^\times}(Z_{\bC^4}) \]
  where $\sF_i/(1-t_i) = \cO_{C_{\pi^i}}$ are the characters of the
  four legs. The sum is the character of the structure sheaf of the
  normalization of $C$, which differs from $\cO_C$ by a finite-length
  module, so localization is no longer required here.
\end{definition}

\subsubsection{}

\begin{definition} \label{def:canonical-half}
  The contribution to $\Ext_X(I, I)_0$ from the chart $\bC^4$ is the
  {\it vertex term}
  \begin{equation} \label{eq:vertex-term}
    \sV \coloneqq \left(\sF + \sF^\vee \kappa_{1234}^\vee - \sF^\vee \sF \tau_{1234}^\vee\right) - \sum_i \frac{\sF_i - \sF_i^\vee \kappa_{jkl}^\vee - \sF_i^\vee \sF_i \tau_{jkl}^\vee}{1 - t_i}
  \end{equation}
  of \cite[\S 4]{Maulik2006} \cite[\S 2.4]{Cao2020}, where the indices
  in the sum satisfy $\{i, j, k, l\} = \{1, 2, 3, 4\}$ and for subsets
  $S \subset \{1,2,3,4\}$ we set
  $\tau_S \coloneqq \prod_{i \in S} (1 - t_i)$ and
  $\kappa_S \coloneqq \prod_i t_i$. Rearranging,
  \begin{align*}
    \sV &= \left(\sF_\fin + \sF_\fin^\vee \kappa_{1234}^\vee\right) - \left(\sF_\fin^\vee \sF_\fin \tau_{1234}^\vee\right) \\
        &\qquad-\sum_i \left(\sF_i^\vee \sF_\fin - \sF_\fin^\vee \sF_i t_i^{-1}\right) \tau_{jkl}^\vee \\
        &\qquad + \sum_{i<j} \left(\sF_i^\vee \sF_j t_j^{-1} + \sF_j^\vee \sF_i t_i^{-1}\right) \tau_{kl}^\vee,
  \end{align*}
  which is manifestly an element of
  $K^\circ_{\sT \times \bC^\times}(Z_{\bC^4})$ and clearly each of the
  bracketed terms is symmetric. We define a half of $\sV$ {\it only on
    the locus} $Z_{\bC^4}^\sT = Z_{\bC^4}^{\bC^\times}$, where there
  is a splitting
  \[ K^\circ_{\sT \times \bC^\times}(Z_{\bC^4}^\sT) = K^\circ(Z_{\bC^4}^\sT) \otimes \bk_{\sT \times \bC^\times} \]
  and so the $\sT$-weight of an element is well-defined. Pick
  $a \in \{1,2,3,4\}$ and a generic cocharacter
  $\sigma\colon \bC^\times \to \sT$ in the cone
  \begin{equation} \label{eq:valid-cocharacters}
    \cochar(\sT)_+ \coloneqq \left\{t_i \gg t_j \gg t_k \gg t_l > 0 : \{i,j,k,l\} = \{1,2,3,4\}\right\}.
  \end{equation}
  Then, on $Z_{\bC^4}^\sT$, define the {\it canonical half}
  \begin{align*}
    \sV_{1/2}^{\sigma,a}
    &\coloneqq \sF_\fin - \left((\sF_\fin^\vee \sF_\fin)_{\ge_\sigma} - ((\sF_\fin^\vee \sF_\fin)_{<_\sigma})^\vee t_a^{-1} \right) \tau_{bcd}^\vee \\
    &\qquad - \sum_i \left(\sF_i^\vee \sF_{\fin,\ge 0} - (\sF_{\fin,<0})^\vee \sF_i t_i^{-1}\right) \tau_{jkl}^\vee \\
    &\qquad + \sum_{i<j} \sF_i^\vee \sF_j t_j^{-1} \tau_{kl}^\vee
  \end{align*}
  where $\{a,b,c,d\} = \{1,2,3,4\}$, subscripts $\ge_\sigma$ (resp.
  $<_\sigma$) mean the part with non-negative (resp. negative)
  $\sigma$-weight, and subscripts $\ge 0$ (resp. $< 0$) mean the part
  with non-negative (resp. non-non-negative) $\sT$-weight. To be clear,
  \[ (t_1^i t_2^j t_3^k t_4^l)_{< 0} = \begin{cases} 0 & i, j, k, l \ge 0 \\ t_1^i t_2^j t_3^k t_4^l & \text{otherwise}. \end{cases} \]
  It is clear that
  $\sV^{\sigma,a}_{1/2} + \kappa^\vee (\sV^{\sigma,a}_{1/2})^\vee = \sV|_{Z_{\bC^4}^\sT}$.
  If unspecified, $a = 4$ and $\sigma$ is reverse lexicographic order,
  i.e. the chamber in \eqref{eq:valid-cocharacters} with
  $(i,j,k,l) = (4,3,2,1)$, following the original convention of
  \cite{Nekrasov2020}.
\end{definition}

\subsubsection{}

Though they were written in the PT setting,
Definitions~\ref{def:normalized-character} and
\ref{def:canonical-half} work equally well for DT moduli spaces. For
both DT and PT, the Calabi--Yau specialization
\begin{equation} \label{eq:Vhalf-CY-specialization}
  \bar \sV_{1/2} \coloneqq \sV_{1/2}\Big|_{\sA \times \bC^\times}
\end{equation}
is the vertex contribution to $\bE_{1/2}$ in our main
Conjecture~\ref{conj:DT-PT-Ovir}. Similarly
\[ \bar \sF_\fin \coloneqq \sF_\fin\Big|_{\sA \times \bC^\times} \]
is the vertex contribution to the Nekrasov insertion $\cO_X^{[n]}$.
Lemmas~\ref{lem:Vhalf-independent-of-a} and
\ref{lem:Vhalf-independent-of-sigma} below show that although
$\bar \sV^{\sigma,a}_{1/2}$ depends on $\sigma$ and $a$, the resulting
formula \eqref{eq:DT-PT-Ovir} of our main conjecture is independent of
$\sigma$ and $a$.

The terminology ``{\it canonical} half'' is because all the terms in
$\sV^{\sigma,a}_{1/2}$ which involve the legs are by definition
independent of $\sigma$ and $a$. Because of rotational symmetry, it
appears difficult to choose a half of the finite term
$\sF^\vee_{\fin} \sF_\fin \tau_{1234}^\vee$ without additional choices
like $\sigma$ or $a$.

\subsubsection{}

\begin{example} \label{ex:CY-fixed-locus-P2}
  Let $\bP^2$ be the $\sA$-fixed component from
  Example~\ref{ex:PT-CY-vs-full-fixed-locus}. Then
  \[ \sF_\fin = \left(1 + \cO_{\bP^2}(-1) \otimes t_4^{-1}\right) - \left(2(1-t_1)(1-t_2)(1-t_3) + 1\right). \]
  Let $p \in \bP^2$ and $L \cong \bP^1 \subset \bP^2$ be the
  $\sT$-fixed components. Then the
  $\bC_\kappa^\times \times \bC^\times$-equivariant structure of
  $\cO_{\bP^2}(-1)$ is determined by
  \[ \cO_{\bP^2}(-1)|_p = \rho^d, \qquad \cO_{\bP^2}(-1)|_L = \cO_L(-1) \otimes \rho^{d+1} \kappa \]
  for any integer $d \in \bZ$. Explicit calculation gives
  \[ \Big(\bar \sV_{1/2}\Big|_p\Big)^{\sA\text{-fix}} = 2\cO_{\bP^2}(1)\Big|_p - 1 + \cO_{\bP^2}(-1)\Big|_p, \qquad \Big(\bar \sV_{1/2}\Big|_L\Big)^{\sA\text{-fix}} = 3\cO_{\bP^2}(1)\Big|_L - 1, \]
  while $\cT^{\vir,\mathrm{BF}}_{\bP^2} = \cT_{\bP^2} = 3\cO(1) - 1$.
  So, in Conjecture~\ref{conj:DT-PT-Ovir}, $p$ has sign $-1$ while $L$
  has sign $+1$.
\end{example}

\subsubsection{}

\begin{lemma} \label{lem:Vhalf-independent-of-a}
  $\hat \se_{\sA \times \bC^\times}(\bar\sV_{1/2}^{\sigma,a})$ is
  independent of $a \in \{1,2,3,4\}$.
\end{lemma}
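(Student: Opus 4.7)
The plan is to prove the stronger statement that $\sV_{1/2}^{\sigma,a}$ itself, not merely its symmetrized Euler class, is independent of $a \in \{1,2,3,4\}$ as a K-theory class on $Z_{\bC^4}^\sT$, from which the lemma is immediate.

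Inspecting the three-line formula of Definition~\ref{def:canonical-half}, only the first line carries any $a$-dependence: the second and third lines sum over an index $i \in \{1,2,3,4\}$ and involve only the complementary triple $\{j,k,l\}$, with no reference to $a$ or $\{b,c,d\}$. So it suffices to analyze
\[
A(a) \coloneqq -\bigl(G_{\ge_\sigma} - (G_{<_\sigma})^\vee\, t_a^{-1}\bigr)\, \tau_{bcd}^\vee, \qquad G \coloneqq \sF_\fin^\vee \sF_\fin,
\]
and show it reduces to an $a$-free expression.

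The key input is self-duality $G^\vee = \sF_\fin (\sF_\fin^\vee) = G$. Using the weight-space decomposition on $Z_{\bC^4}^\sT$ (where $K^\circ_{\sT \times \bC^\times}(Z_{\bC^4}^\sT) = K^\circ(Z_{\bC^4}^\sT) \otimes \bk_{\sT \times \bC^\times}$) and the fact that $\sF_\fin$ has finite length, only finitely many $\sT$-weights appear in $G$. A generic $\sigma$ in the cone \eqref{eq:valid-cocharacters}---this genericity is already part of Definition~\ref{def:canonical-half}---avoids the finitely many hyperplanes in $\cochar(\sT)_+ \otimes \bR$ on which one of those weights would pair to zero. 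Writing $G = \sum_w G_w \otimes w$ and matching weight by weight, self-duality forces $G_w^\vee = G_{w^{-1}}$, and since $\sigma(w^{-1}) = -\sigma(w)$ with no zero $\sigma$-weights, one obtains the identification $(G_{<_\sigma})^\vee = G_{>_\sigma} = G_{\ge_\sigma}$ at the level of K-theory classes.

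Substituting this identity into $A(a)$,
\[
A(a) = -G_{\ge_\sigma}\tau_{bcd}^\vee + G_{\ge_\sigma}\, t_a^{-1} \tau_{bcd}^\vee = -G_{\ge_\sigma}(1 - t_a^{-1})\tau_{bcd}^\vee = -G_{\ge_\sigma}\, \tau_{1234}^\vee,
\]
which is manifestly independent of $a$. Therefore $\sV_{1/2}^{\sigma,a}$ is itself independent of $a$, and so are both its Calabi--Yau specialization $\bar\sV_{1/2}^{\sigma,a}$ and $\hat\se_{\sA \times \bC^\times}(\bar\sV_{1/2}^{\sigma,a})$. The only nontrivial step is the self-duality argument giving $(G_{<_\sigma})^\vee = G_{\ge_\sigma}$, but this uses only the built-in genericity of $\sigma$ and finite length of $\sF_\fin$, so I anticipate no real obstacle.
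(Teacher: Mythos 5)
There is a genuine gap. Your argument hinges on the identity $(G_{<_\sigma})^\vee = G_{\ge_\sigma}$ for $G = \sF_\fin^\vee\sF_\fin$, but self-duality only gives $(G_{<_\sigma})^\vee = G_{>_\sigma}$, and these differ by $G_{=_\sigma}$, which is \emph{not} zero. Genericity of $\sigma$ rules out nontrivial $\sT$-weights $w$ with $\sigma(w)=0$, but the trivial weight $w=1$ always appears in $\sF_\fin^\vee\sF_\fin$ (from pairing each weight of $\sF_\fin$ with itself), and $\sigma(1)=0$ for \emph{every} cocharacter. So $G_{=_\sigma}$ is generically the nonzero $\sT$-invariant part of $G$, and your clean factorization $A(a) = -G_{\ge_\sigma}\tau_{1234}^\vee$ breaks down: correctly,
\[
A(a) = -G_{>_\sigma}\tau_{1234}^\vee - G_{=_\sigma}\tau_{bcd}^\vee,
\]
and the second term still depends on $a$. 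In particular, the stronger claim that $\sV_{1/2}^{\sigma,a}$ is independent of $a$ as a K-theory class is false; the paper's computation shows explicitly that $\sV_{1/2}^{\sigma,4} - \sV_{1/2}^{\sigma,3} = (\sF_\fin^\vee\sF_\fin)_{=_\sigma}\,\tau_{12}^\vee(t_4^{-1}-t_3^{-1}) \ne 0$ in general.

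What you are missing is the final step: the $a$-discrepancy is not zero but has the self-dual form $\sS_{1/2} - \kappa_{1234}^\vee\sS_{1/2}^\vee$, and for such a class the symmetrized Euler class $\hat\se_{\sA\times\bC^\times}$ changes only by the sign $(-1)^{\rank \bar\sS_{1/2}^{\sA\text{-mov}}}$. One then uses that $(\sF_\fin^\vee\sF_\fin)_{=_\sigma}$ is $\sT$-trivial (by genericity of $\sigma$) to conclude this rank is even, so the sign is $+1$. Your approach was on the right track in isolating the first line of $\sV_{1/2}^{\sigma,a}$ as the only $a$-dependent piece, but the proof cannot avoid this last parity argument.
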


\begin{proof}
  Without loss of generality, it suffices to compute
  $\hat \se_{\sA \times \bC^\times}$ of the CY specialization of
  \begin{align*}
    \sV_{1/2}^{\sigma,4} - \sV_{1/2}^{\sigma,3}
    &= (\sF_\fin^\vee \sF_\fin)_{\ge_\sigma} (\tau_{123}^\vee - \tau_{124}^\vee) - (\sF_\fin^\vee \sF_\fin)_{>_\sigma} (\tau_{123}^\vee t_4^{-1} - \tau_{124}^\vee t_3^{-1}) \\
    &= (\sF_\fin^\vee \sF_\fin)_{=_\sigma} \tau_{12}^\vee (t_4^{-1} - t_3^{-1}).
  \end{align*}
  This has the form $\sS_{1/2} - \kappa_{1234}^\vee \sS_{1/2}^\vee$.
  Since the cocharacter $\sigma$ is generic,
  $(\sF_\fin^\vee \sF_\fin)_{=_\sigma}$ is constant in $\sT$, so
  $\rank \bar\sS_{1/2}^{\sA\text{-mov}} = \rank \bar \sS_{1/2}$ is
  even and we are done.
\end{proof}

\subsubsection{}

\begin{lemma} \label{lem:Vhalf-independent-of-sigma}
  $\hat \se_{\sA \times \bC^\times}(\bar\sV_{1/2}^{\sigma,a})$ is
  independent of $\sigma \in \cochar(\sT)_+$.
\end{lemma}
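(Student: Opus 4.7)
The plan is to reduce to showing invariance across a single wall in $\cochar(\sT)_+$. The cone decomposes into $24$ open chambers indexed by orderings of $(t_1, t_2, t_3, t_4)$; within each, the splitting $A = A_{\ge_\sigma} + A_{<_\sigma}$ of $A \coloneqq \sF_\fin^\vee \sF_\fin$ is constant, so $\sV_{1/2}^{\sigma,a}$ itself is constant per chamber. Adjacent chambers differ by a simple transposition of consecutive entries in the ordering, and the resulting adjacency graph (the Cayley graph of $S_4$ on simple transpositions) is connected, so only a single wall-crossing needs to be addressed.

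For WLOG adjacent $\sigma, \sigma'$ swapping $t_1 \leftrightarrow t_2$, a monomial $t_1^{a_1} t_2^{a_2} t_3^{a_3} t_4^{a_4}$ has opposite $\sigma$- and $\sigma'$-sign precisely when $a_1, a_2$ are nonzero of opposite sign. Letting $R \subset A$ be the portion with $a_1 > 0, a_2 < 0$, self-duality $A = A^\vee$ puts the other discrepant portion in $R^\vee$, so $A_{\ge_\sigma} - A_{\ge_{\sigma'}} = R - R^\vee$ and $(A_{<_\sigma})^\vee - (A_{<_{\sigma'}})^\vee = R - R^\vee$. Since only the $A$-containing bracket in Definition~\ref{def:canonical-half} depends on $\sigma$, the expected calculation yields
\[
  \sV_{1/2}^{\sigma,a} - \sV_{1/2}^{\sigma',a} = -(R - R^\vee)(1 - t_a^{-1})\tau_{bcd}^\vee = -(R - R^\vee)\,\tau_{1234}^\vee,
\]
which, setting $\sS \coloneqq -R\tau_{1234}^\vee$ and using $\tau_{1234} = \kappa_{1234} \tau_{1234}^\vee$, is precisely the form $\sS - \kappa_{1234}^\vee \sS^\vee$ already addressed in Lemma~\ref{lem:Vhalf-independent-of-a}.

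Applying that lemma's criterion, it suffices that $\rank \bar \sS$ be even and $\det \bar \sS$ trivial. Both follow universally from the factor $\tau_{1234}^\vee = \prod_{i=1}^4 (1 - t_i^{-1})$: one has $\rank \tau_{1234}^\vee = (1-1)^4 = 0$, and in the expansion $\det(R \tau_{1234}^\vee) = \prod_S [\det(R)\, t^{-S\,\rank R}]^{(-1)^{|S|}}$ the $\det(R)$-exponent is $\sum_S (-1)^{|S|} = 0$ and each $t_i$-exponent is proportional to $\sum_{S \ni i}(-1)^{|S|} = -(1-1)^3 = 0$, giving $\det \sS = 1$. Neither condition depends on $R$, and both survive the CY specialization $\kappa_{1234} = 1$. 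The main obstacle is really just the sign and self-duality bookkeeping isolating the $\sigma$-dependent piece and identifying $R^\vee$; once the $\sS - \kappa^\vee \sS^\vee$ form is established with its universal $\tau_{1234}^\vee$ factor, the rank/determinant triviality is automatic and the conclusion follows from the framework of the preceding lemma.
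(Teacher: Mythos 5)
Your overall strategy matches the paper's: reduce to a single wall-crossing, isolate the $\sigma$-dependent bracket, and identify the difference as having the form $\sS_{1/2} - \kappa_{1234}^\vee\sS_{1/2}^\vee$ with $\sS_{1/2}$ proportional to $R\,\tau_{1234}^\vee$ where $R = (\sF_\fin^\vee\sF_\fin)_{>_\sigma,<_{\sigma'}}$. The wall-crossing combinatorics (opposite $\sigma$- and $\sigma'$-signs iff $a_1 > 0, a_2 < 0$, and the identification $R' = R^\vee$ via self-duality of $\sF_\fin^\vee\sF_\fin$) are correct, and the algebra giving $-(R - R^\vee)\tau_{1234}^\vee$ is fine.

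The gap is in the final step. You invoke a criterion ``$\rank\bar\sS$ even and $\det\bar\sS$ trivial'' and satisfy it purely formally from $\rank\tau_{1234}^\vee = 0$. But the criterion actually extracted from the proof of Lemma~\ref{lem:Vhalf-independent-of-a} is that $\rank\bar\sS_{1/2}^{\sA\text{-mov}}$ be even, which requires controlling the \emph{$\sA$-fixed} part of $\bar\sS_{1/2}$. Since $\rank\bar\sS_{1/2} = 0$, one has $\rank\bar\sS_{1/2}^{\sA\text{-mov}} = -\rank\bar\sS_{1/2}^{\sA\text{-fix}}$, so without information on the fixed part the parity could still be odd. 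The entire substance of the paper's proof of this lemma is exactly a verification that $\bar\sS_{1/2}^{\sA\text{-fix}} = 0$: if $w = t_1^i t_2^j t_3^k t_4^l$ is a weight of $(\sF_\fin^\vee\sF_\fin)_{>_\sigma,<_{\sigma'}}$ then $i > 0$ and $j < 0$, hence $i - j > 1$, so for every exponent $\epsilon \in \{0,1\}^4$ appearing in $\tau_{bcd}^\vee$ one has $(i - \epsilon_1) \neq (j - \epsilon_2)$, and therefore $w\,\tau_{bcd}^\vee$ contains no power of $\kappa$. This combinatorial observation, which is the only non-formal input, is missing from your argument. Note that the analogous step in Lemma~\ref{lem:Vhalf-independent-of-a} is essentially free there because $(\sF_\fin^\vee\sF_\fin)_{=_\sigma}$ is $\sT$-trivial for generic $\sigma$ and $\tau_{12}^\vee t_4^{-1}$ visibly has no $\sA$-fixed monomial; here $R$ carries nontrivial $\sT$-characters and the check is genuinely needed. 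Your extra determinant-triviality condition is not part of the paper's criterion; it is harmless (and indeed automatic) but does not substitute for the $\sA$-fixed analysis.

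Once you add the observation $i - j > 1 \Rightarrow$ no $\sA$-fixed monomials in $w\,\tau_{bcd}^\vee$, the proof is complete and essentially coincides with the paper's.
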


\begin{proof}
  Without loss of generality, let
  $\sigma \in \{t_1 \gg t_2 \gg t_3 \gg t_4 > 0\}$ and
  $\sigma' \in \{t_2 \gg t_1 \gg t_3 \gg t_4 > 0\}$ be generic
  cocharacters in these chambers. We compute
  $\hat \se_{\sA \times \bC^\times}$ of the CY specialization of
  \[ \sV_{1/2}^{\sigma,a} - \sV_{1/2}^{\sigma',a} = \left((\sF_\fin^\vee \sF_\fin)_{\substack{>_\sigma\\<_{\sigma'}}} - (\sF_\fin^\vee \sF_\fin)_{\substack{<_\sigma\\>_{\sigma'}}}\right)\tau_{bcd}^\vee - \kappa_{abcd}^\vee (\cdots) \]
  where $\cdots$ denotes the dual of the preceding term. We implicitly
  used here that
  $f_{\ge_\sigma,<_{\sigma'}} = f_{>_\sigma,<_{\sigma'}}$ for any
  $f \in \bk_\sT$ by genericity of $\sigma$ and $\sigma'$. Writing
  this as $\sS_{1/2} - \kappa_{1234}^\vee \sS_{1/2}^\vee$, it suffices
  to show $\rank \bar\sS_{1/2}^{\sA\text{-fix}} = 0$ by the same
  reasoning as in the proof of Lemma~\ref{lem:Vhalf-independent-of-a}.

  Let $w = t_1^i t_2^j t_3^k t_4^l$ be a $\sT$-weight appearing in
  $(\sF_\fin^\vee \sF_\fin)_{>_\sigma,<_{\sigma'}}$. It must be that
  $i > 0$ and $j < 0$. In particular, $i - j > 1$, so
  \[ (i, j, k, l) \not\in \{0, 1\}^4 \bmod{(1,1,1,1)} \]
  because the rhs has $|a - b| \le 1$ for any coordinates $a, b$ in
  the $4$-tuple. Hence $(w \tau_{bcd}^\vee)\big|_\sA$ has no
  $\sA$-fixed part. The same argument works for
  $(\sF_\fin^\vee \sF_\fin)_{<_\sigma,>_{\sigma'}}$.
\end{proof}

\subsection{The Nekrasov--Piazzalunga sign}
\label{sec:NP-sign}

\subsubsection{}

This subsection shows that our half $\sV_{1/2}$ subsumes the half
$\sV_{1/2}^{\text{CKM}}$ prescribed by Cao--Kool--Monavari
\cite[Equation (18)]{Cao2022}, to be defined below, which itself
subsumes the original prescription of Nekrasov for $\Hilb(\bC^4)$
\cite{Nekrasov2020}. All these previous halvings require an additional
non-trivial sign contribution at each $\sT$-fixed point, which has an
explicit conjectural formula, namely the Nekrasov--Piazzalunga sign
\begin{equation} \label{eq:NP-sign}
  \NP_4(\xi) \coloneqq \sum_{\substack{\square=(n_1, n_2, n_3, n_4) \in \xi\\n_1=n_2=n_3<n_4}} \left(1 - \#\{\text{legs containing } \square\}\right),
\end{equation}
{\it only for} DT vertices with empty fourth leg. A meaningful
comparison (Proposition~\ref{prop:CKM-vs-our-half}) with our
$\sV_{1/2}$ is therefore only possible in this setting. Hence for this
subsection we will implicitly only consider DT theory, where in
particular $\sF_{\fin,\ge 0} = \sF_\fin$, and the main goal to prove
Proposition~\ref{prop:CKM-vs-our-half}.

\subsubsection{}

\begin{definition}
  In the notation of Definition~\ref{def:canonical-half}, the {\it CKM
    half} of $\sV$ is
  \begin{align*}
    \sV_{1/2}^{\mathrm{CKM},l}
    &\coloneqq \sF_\fin - \sF_\fin^\vee \sF_\fin \tau_{ijk}^\vee \\
    &\qquad -\bigg(\sF_l^\vee \sF_\fin \tau_{ijk}^\vee + \sum_{i \neq l} \left(\sF_i^\vee \sF_\fin - \sF_\fin^\vee \sF_i t_i^{-1}\right) \tau_{jk}^\vee\bigg) \\
    &\qquad +\bigg(\sum_{i < l} \sF_i^\vee \sF_l t_l^{-1} \tau_{jk}^\vee + \sum_{l < j} \sF_l^\vee \sF_j t_j^{-1} \tau_{ik}^\vee + \sum_{\substack{i<j\\i,j \neq l}} \left(\sF_i^\vee \sF_j t_j^{-1} + \sF_j^\vee \sF_i t_i^{-1}\right) \tau_k^\vee\bigg).
  \end{align*}
  Put differently, and perhaps more clearly, the discrepancy with our
  half is that
  \begin{equation} \label{eq:discrepancy-with-CKM}
    \begin{array}{c|c|c}
      \sV_{1/2}^{\mathrm{CKM},l} \text{ contains} & \sV_{1/2}^{\sigma,l} \text{ contains} & \text{for all} \\ \hline
      \sF_\fin^\vee \sF_\fin \tau_{ijk}^\vee & \left((\sF_\fin^\vee \sF_\fin)_{\ge_\sigma} - ((\sF_\fin^\vee \sF_\fin)_{<_\sigma})^\vee t_l^{-1} \right) \tau_{ijk}^\vee \\
      \left(\sF_i^\vee \sF_\fin - \sF_\fin^\vee \sF_i t_i^{-1}\right) \tau_{jk}^\vee & \sF_i^\vee \sF_\fin \tau_{jkl}^\vee & i \neq l \\
      \left(\sF_i^\vee \sF_j t_j^{-1} + \sF_j^\vee \sF_i t_i^{-1}\right) \tau_k^\vee & \sF_i^\vee \sF_j t_j^{-1} \tau_{kl}^\vee & i, j \neq l
    \end{array}
  \end{equation}
  and all other terms remain the same. Unlike our half, note that {\it
    all} parts (except $\sF_\fin$) of the CKM half depend
  asymmetrically on $l$. While this property facilitates dimensional
  reduction along the $l$-th axis to the $3$-fold setting \cite[\S
  2]{Cao2022}, it significantly complicates the analogue of
  Proposition~\ref{lem:Vhalf-independent-of-a} on independence on the
  choice of $l$; see \cite[Theorem 2.14]{Monavari2022}.
\end{definition}

\subsubsection{}

\begin{proof}[Proof of Proposition~\ref{prop:CKM-vs-our-half}.]
  Both $\sV_{1/2}^{\sigma,l}$ and $\sV_{1/2}^{\mathrm{CKM},l}$ are
  halves of $\sV$, so
  \[ \hat \se_\sA\left(\bar\sV_{1/2}^{\mathrm{CKM},l} - \bar\sV_{1/2}^{\sigma,l}\right) = \hat \se_\sA\left(\bar\sS_{1/2} - \bar\sS_{1/2}^\vee\right) = (-1)^{\rank \bar \sS_{1/2}^{\sA\text{-mov}}} \]
  for some $\sS_{1/2} \in \bk_\sT$; from
  \eqref{eq:discrepancy-with-CKM}, we can take
  \begin{align}
    \sS_{1/2}
    &= (\sF_\fin^\vee \sF_\fin)_{>_\sigma} t_l^{-1} \tau_{ijk}^\vee + \sum_{i \neq l} \sF_i^\vee \sF_\fin t_l^{-1} \tau_{jk}^\vee + \sum_{i,j \neq l} \sF_i^\vee \sF_j t_j^{-1} t_l^{-1} \tau_k^\vee \nonumber \\
    &= \sC(\xi_\fin, \xi) t_l^{-1} \tau_{ijk}^\vee + \sum_{i,j \neq l} \sF_i^\vee \sF_j t_j^{-1} t_l^{-1} \tau_k^\vee \label{eq:CKM-vs-our-half-discrepancy}
  \end{align}
  where $\xi_\fin \subset \xi$ is the collection of boxes in $\xi$
  which do not lie in exactly one leg, and, letting $w(\square)$
  denote the $\sT$-weight of a box $\square \in \xi$,
  \[ \sC(\xi_\fin, \xi) \coloneqq \sum_{\substack{\square \in \xi_\fin\\\square' \in \xi}} \frac{w(\square)}{w(\square')} \cdot \begin{cases} (1 - \nlegs(\square)) & w(\square) >_\sigma w(\square') \\ (1 - \nlegs(\square)) \#\text{legs}(\square') & \text{otherwise}. \end{cases} \]
  Here we used that the $l$-th leg of $\xi$ is trivial, so
  $\sF = \sF_\fin + \sum_{i \neq l} \sF_i$.

  Since $\rank \bar \sS_{1/2} = 0$ is even, it is equivalent to
  compute the rank of the $\sA$-fixed part of $\bar \sS_{1/2}$. This
  rank is zero for the terms in the sum in
  \eqref{eq:CKM-vs-our-half-discrepancy} by
  Lemma~\ref{lem:no-sign-from-leg-leg-part}, and the same parity as
  $\NP_4$ for the first term in \eqref{eq:CKM-vs-our-half-discrepancy}
  by Lemma~\ref{lem:NP-sign-from-our-half}.
\end{proof}

\subsubsection{}

\begin{lemma} \label{lem:no-sign-from-leg-leg-part}
  $\rank \Big((\sF_i^\vee \sF_j t_j^{-1} t_l^{-1} \tau_k^\vee)\Big|_\sA\Big)^{\sA\text{-fix}} = 0$.
\end{lemma}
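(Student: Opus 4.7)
The plan is to show that no monomial of $\sF_i^\vee \sF_j t_j^{-1} t_l^{-1} \tau_k^\vee$ becomes $\sA$-trivial, by exhibiting a strict sign separation between its $t_i$- and $t_j$-exponents. The character lattice of $\sA = \{\kappa = 1\}$ is $\bZ^4/\bZ(1,1,1,1)$, so a weight $t_1^{a_1} t_2^{a_2} t_3^{a_3} t_4^{a_4}$ restricts to $1 \in \bk_\sA$ precisely when $a_1 = a_2 = a_3 = a_4$. It therefore suffices to rule out $a_i = a_j$ for every monomial appearing in the product.

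The key structural input, which I would invoke directly from Definition~\ref{def:normalized-character}, is that $\sF_m = \pi^m(t_{m+1}, t_{m+2}, t_{m+3})$ is a polynomial with non-negative integer coefficients in the three variables \emph{other than} $t_m$, being the character of the plane partition cross-section of the leg cylinder $C_{\pi^m}$. Hence every monomial of $\sF_m$ has $t_m$-exponent exactly $0$ and non-negative exponents in the remaining variables, and dually every monomial of $\sF_m^\vee$ has $t_m$-exponent $0$ and non-positive exponents elsewhere.

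Given this, I would track the $t_i$- and $t_j$-exponents of an arbitrary monomial in the product, assuming $\{i,j,k,l\}$ is a permutation of $\{1,2,3,4\}$ so that in particular $i \neq j$. The $t_i$-exponent collects $0$ from $\sF_i^\vee$, a non-negative contribution from $\sF_j$ (since $i \neq j$ places $t_i$ among the allowed variables of $\sF_j$), and $0$ from each of $t_j^{-1}$, $t_l^{-1}$, $\tau_k^\vee$ because $i \notin \{j,k,l\}$; the total is $\ge 0$. The $t_j$-exponent collects a non-positive contribution from $\sF_i^\vee$, $0$ from $\sF_j$, exactly $-1$ from $t_j^{-1}$, and $0$ from $t_l^{-1}$ and $\tau_k^\vee$; the total is $\le -1$. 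Since $\{\ge 0\}$ and $\{\le -1\}$ are disjoint, $a_i \neq a_j$ for every monomial, and the $\sA$-fixed rank vanishes.

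There is no real obstacle beyond careful bookkeeping; the point worth spotlighting is that the factor $t_j^{-1}$ is exactly what forces the strict inequality between the two ranges. It is also worth flagging that the analogous statement genuinely fails when $i = j$, because then $\sF_i^\vee \sF_i$ carries a sizeable diagonal $\sT$-fixed contribution; this is why the sum in~\eqref{eq:CKM-vs-our-half-discrepancy} must be understood as implicitly restricted to $i \neq j$ in $\{1,2,3,4\} \setminus \{l\}$.
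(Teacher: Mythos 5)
Your argument is correct and is essentially the paper's own proof: both reduce to comparing the $t_i$- and $t_j$-exponents of an arbitrary monomial, using that $\sF_m$ has trivial $t_m$-dependence and non-negative exponents elsewhere, with the explicit $t_j^{-1}$ factor supplying the strict separation that rules out the $\sA$-trivial residue class. (Your closing remark about $i=j$ is harmless but moot here, since $\{i,j,k,l\}=\{1,2,3,4\}$ already forces $i\neq j$.)
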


\begin{proof}
  Without loss of generality, $(i, j, k, l) = (1, 2, 3, 4)$. For boxes
  $(0, a_2, a_3, a_4) \in \sF_1$ and $(b_1, 0, b_3, b_4) \in \sF_2$,
  an $\sA$-fixed term can only arise when
  \[ (b_1, 0, b_3, b_4) - (0, a_2, a_3, a_4) = (b_1, -a_2, b_3-a_3, b_4-a_4) \]
  is congruent to $(0, 1, 0, 1)$ or $(0, 1, 1, 1)$ modulo
  $(1, 1, 1, 1)$. This is impossible because, looking at the first two
  coordinates, $b_1 \ge 0 \ge -a_2$ while $0 < 1$.
\end{proof}

\subsubsection{}

\begin{lemma} \label{lem:NP-sign-from-our-half}
  $\rank\Big((\sC(\xi_\fin, \xi)t_4^{-1} \tau_{123}^\vee)\Big|_\sA\Big)^{\sA\text{-fix}} \equiv \NP_4(\xi) \bmod{2}$.
\end{lemma}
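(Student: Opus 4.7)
The plan is to compute the $\sA$-fixed rank explicitly by box-counting and then reduce modulo $2$. Expanding $\sC(\xi_\fin,\xi)t_4^{-1}\tau_{123}^\vee$ term-by-term, a pair $(\square,\square')\in\xi_\fin\times\xi$ contributes to the coefficient of some $\kappa^n$ exactly when $\square-\square'=(k-a_1,k-a_2,k-a_3,k)$ for some $k\in\bZ$ and $a\in\{0,1\}^3$, with associated sign $(-1)^{3-\sum a}=-(-1)^{\sum a}$. Under reverse lexicographic $\sigma$, the case distinction $w(\square)>_\sigma w(\square')$ versus $\le_\sigma$ translates precisely to $k\ge 1$ versus $k\le 0$. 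Writing $\square^*_{k,a}:=\square-k(1,1,1,1)+(a_1,a_2,a_3,0)$ (with the convention $\mathbf{1}[\square^*_{k,a}\in\xi]=0$ whenever any coordinate is negative), this gives
\[ \rank\Big((\sC(\xi_\fin,\xi)t_4^{-1}\tau_{123}^\vee)|_\sA\Big)^{\sA\text{-fix}}=-\sum_{\square\in\xi_\fin}(1-\nlegs(\square))\bigl(P(\square)+Q(\square)\bigr), \]
with $P(\square)=\sum_{k\ge 1,\,a}(-1)^{\sum a}\mathbf{1}[\square^*_{k,a}\in\xi]$ and $Q(\square)=\sum_{k\le 0,\,a}(-1)^{\sum a}\nlegs(\square^*_{k,a})\mathbf{1}[\square^*_{k,a}\in\xi]$.

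Second, I show $Q\equiv 0$ identically, reusing the cancellation of Lemma~\ref{lem:no-sign-from-leg-leg-part}. Since the fourth leg is empty, $\nlegs(\square')\mathbf{1}[\square'\in\xi]=\sum_{i=1}^3\mathbf{1}[\square'\in L_i]$, and the condition $\square'\in L_i$ depends only on $(\square'_{i+1},\square'_{i+2},\square'_{i+3})\in\pi^i$ — hence not on $\square'_i$ and so not on $a_i$. The partial sum $\sum_{a_i\in\{0,1\}}(-1)^{a_i}=0$ kills each of the three pieces. It remains to identify $P(\square)\bmod 2$ with the diagonal indicator. Signs drop out modulo $2$, so $P(\square)\equiv|R(\square)\cap\xi|\bmod 2$, where $R(\square)=\bigsqcup_{k\ge 1}R_k(\square)$ and $R_k(\square)=\{\square^*_{k,a}:a\in\{0,1\}^3\}\cap\bZ_{\ge 0}^4$ is a slice of at most $8$ points in the hyperplane $\square^*_4=\square_4-k$.

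The key observation is that, since $\square\in\xi_\fin\subseteq\xi$ and $\xi$ is down-closed in $\bZ_{\ge 0}^4$, the slices behave very cleanly. Setting $n:=\min(\square_1,\square_2,\square_3)$ and $m:=\#\{i\in\{1,2,3\}:\square_i=n\}$: for $1\le k\le\min(n,\square_4)$, all $8$ points of $R_k(\square)$ are componentwise $\le\square$ and hence in $\xi$, contributing $8\equiv 0\pmod 2$; the slice $R_{n+1}(\square)$ is empty if $\square_4\le n$ and otherwise consists of exactly $2^{3-m}$ points, each componentwise $\le\square$ and so in $\xi$; for $k\ge n+2$ some coordinate with $\square_i=n$ is forced to be negative and $R_k(\square)=\emptyset$. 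Thus $|R(\square)\cap\xi|$ is odd precisely when $m=3$ and $\square_4>n$, i.e., precisely when $\square_1=\square_2=\square_3<\square_4$.

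Combining, the rank reduces modulo $2$ to $\sum_{\square\in\xi_\fin}(1-\nlegs(\square))\mathbf{1}[\square\text{ on diagonal}]$, which matches $\NP_4(\xi)\bmod 2$ because diagonal boxes in $\xi$ with $\nlegs(\square)=1$ contribute $1-1=0$ and lie outside $\xi_\fin$ anyway, while diagonal boxes with $\nlegs(\square)\in\{0,2,3\}$ automatically lie in $\xi_\fin$ and contribute identically to both sums. The main subtlety is the careful bookkeeping of signs in the first step; once that is done, the combinatorial heart of the argument is the clean slice-by-slice analysis enabled entirely by down-closedness of $\xi$.
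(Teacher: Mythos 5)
Your proof is correct and follows essentially the same strategy as the paper: fix $\square\in\xi_\fin$, split the $\square'$-sum according to $\sigma$-order into what you call $P(\square)$ (the paper's $S_-$) and $Q(\square)$ (the paper's $S_+$), show the $\le_\sigma$ piece contributes nothing, and show the $>_\sigma$ piece is odd precisely when $\square_1=\square_2=\square_3<\square_4$. The one place where you are arguably cleaner than the paper is the vanishing of $Q$: you observe that $\nlegs(\square')\mathbf{1}[\square'\in\xi]=\sum_{i=1}^{3}\mathbf{1}[\square'\in L_i]$ (valid since the fourth leg is empty and all coordinates of $\square^*_{k,a}$ are automatically nonnegative when $k\le 0$), and each summand then cancels \emph{exactly} under $\sum_{a_i\in\{0,1\}}(-1)^{a_i}=0$, whereas the paper only argues a mod-$2$ cancellation via a boundary-box analysis. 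Your slice-by-slice treatment of $P(\square)$ (full $8$-point slices for $1\le k\le\min(n,\square_4)$, the partial slice of size $2^{3-m}$ at $k=n+1$, empty beyond) is the same down-closedness argument the paper makes, just spelled out with more explicit bookkeeping, and the final identification with $\NP_4(\xi)$ via the observation that diagonal boxes with $\nlegs=1$ contribute zero matches the paper's closing remark.
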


\begin{proof}
  Fix a box $\square \in \xi_\fin$. Suppose $\square' \in \xi$ such
  that the corresponding term in $\sC(\xi_\fin, \xi)$ produces an
  $\sA$-fixed term. We count the contribution of such $\square'$ as
  follows.

  Consider those $\square'$ with $w(\square) >_\sigma w(\square')$.
  These contribute $1 - \nlegs(\square)$. Since $\sigma$ is reverse
  lexicographic order, it must be that
  \[ \square - \square' \in (\{0, 1\}, \{0, 1\}, \{0, 1\}, 1) + \bZ_{\ge 0} \cdot (1, 1, 1, 1). \]
  In other words, we must count the number of boxes in $\xi \cap S_-$
  where
  \[ S_- \coloneqq \square - (\{0, 1\}, \{0, 1\}, \{0, 1\}, 1) - \bZ_{\ge 0} \cdot (1, 1, 1, 1) \]
  is a collection of cubes descending along the $(1,1,1,1)$ axis
  starting at $\square$. This number is even unless there exists
  $\square' \in \xi \cap (\square - \bZ_{\ge 0} \cdot (1,1,1,1))$ such
  that $\square' - \vec e_i \notin \xi$ for any $i = 1, 2, 3$, where
  $\vec e_i$ are standard basis vectors. In this case, the definition
  of a solid partition implies $\square' = (0, 0, 0, d)$ for some
  $d > 0$.

  Now consider those $\square'$ with
  $w(\square) \le_\sigma w(\square')$. These contribute
  $(1 - \nlegs(\square))\nlegs(\square')$. Now we must count the
  number of boxes in $\xi \cap S_+$ where
  \[ S_+ \coloneqq \square + (\{0, 1\}, \{0, 1\}, \{0, 1\}, 0) + \bZ_{\ge 0} \cdot (1, 1, 1, 1), \]
  and this number is even unless there exists
  $\square' \in \xi \cap (\square + \bZ_{\ge 0} \cdot (1,1,1,1))$ such
  that $\square' + \vec e_i \notin \xi$ for any $i = 1, 2, 3$. In this
  case, as legs extend infinitely along their axes and there is no
  fourth leg, $\nlegs(\square') = 0$ so the contribution is zero.

  Adding up all these contributions gives
  \[ \sum_{\substack{\square=(n_1,n_2,n_3,n_4) \in \xi_\fin\\n_1=n_2=n_3<n_4}} (1 - \nlegs(\square)) = \NP_4(\xi), \]
  because by definition all boxes in $\xi \setminus \xi_\fin$ belong
  to exactly one leg.
\end{proof}

\subsection{1-legged case}
\label{sec:1-leg}

\subsubsection{}

Fix a curve $C \coloneqq \bP^1$ and let $X = C \times \bC^3$ with
projection $\pi\colon X \to \bP^1$. Let $x_1, x_2, x_3$ be the
coordinates on $\bC^3$, and $x_4$ be the coordinate on $C$.

\begin{lemma} \label{lem:PT-QM-correspondence}
  There is an isomorphism
  \begin{align}
    \Phi\colon \PT_{d,n-d}(X) &\xrightarrow{\sim} \QM_n(\Hilb(\bC^3,d)) \label{eq:PT-QM-correspondence} \\
    [\cO_X \xrightarrow{s} \cF] &\mapsto [\cO_C \xrightarrow{\pi_*s} \pi_*\cF] \nonumber
  \end{align}
  of moduli spaces with obstruction theory, \footnote{For simplicity,
    we check the agreement of obstruction theories only in the
    K-group, which is sufficient for our purposes, but certainly they
    should agree in $D^b\mathscr{C}\!{\it oh}_\sT$ as well.} where
  $\QM_n$ denotes the moduli space of stable quasimaps
  \cite{Ciocan-Fontanine2014} of degree $n$ from the fixed source
  curve $C$.
\end{lemma}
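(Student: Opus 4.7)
The plan is to produce $\Phi$ and its inverse as maps of moduli functors, and then compare the obstruction-theory complexes in $K$-theory via pushforward along $\pi$. I would work in families from the outset so that the set-theoretic bijection automatically upgrades to a scheme isomorphism.

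For the moduli map, take a PT pair $[\cO_X \xrightarrow{s} \cF]$ with $\ch(\cF) = (0,0,0,d[C],n-d)$. In the $1$-leg setting the CM support curve of $\cF$ projects properly and generically finitely onto $C$ with degree $d$, so $\pi_*\cF$ is a coherent sheaf on $\bP^1$, torsion-free of generic rank $d$ by purity, hence locally free of rank $d$. PT stability ($\dim\coker s = 0$) pushes to a $0$-dimensional cokernel of $\pi_*s$, and the pair $[\cO_C \xrightarrow{\pi_*s} \pi_*\cF]$ is the data of a stable quasimap: away from the finitely many base-points the fiber of $\pi_*s$ is an honest length-$d$ ideal in $\cO_{\bC^3}$, i.e. a point of $\Hilb(\bC^3,d)$, and PT stability is precisely the quasimap stability condition. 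The quasimap degree is $n$ after a standard Euler-characteristic bookkeeping, explaining the index shift. Conversely, a stable quasimap is the datum of an ideal $\scI \subset \cO_X$ generically flat over $C$ with isolated failures; taking $\cF$ to be the maximal pure $1$-dimensional subsheaf of $\cO_X/\scI$ produces the inverse PT pair. These constructions are manifestly mutually inverse and functorial in families, giving $\Phi$ as a scheme isomorphism.

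For the obstruction theories in $K$-theory, the PT side is $\bE^\bullet_{\PT} = R\cHom_{\pi_M}(\scI,\scI)^\vee_0[-1]$ with $\scI = [\cO \to \scF]$, while the quasimap obstruction complex is $R\pi_{M*}(f^*\cT_{\Hilb(\bC^3,d)})$, with $\cT_{\Hilb}|_{[I]} = \Hom_{\bC^3}(I, \cO_{\bC^3}/I)$. Factoring $\pi_M\colon M \times X \to M \times C \to M$ and computing the inner pushforward along the CY$_3$-fiber $\bC^3$ using relative Serre duality collapses the four-term complex $R\cHom_X(\scI,\scI)_0$ to a two-term complex on $M \times C$ whose class equals $R(\mathrm{id}\times\pi)_*(\scI^\vee \otimes \scF)$ modulo trace and $\cO_X$-contributions. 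After the remaining pushforward to $M$, this character matches $R\Gamma(C, f^*\cT_{\Hilb})$, giving the $K$-theoretic identification.

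The main obstacle is correctly tracking the trace-free summand, the shift $[-1]$ in $\bE^\bullet_{\PT}$, and the matching of $\bC_\kappa^\times$-weights: the PT side carries full CY$_4$ weights from $\cK_X \cong \kappa\otimes\cO_X$, whereas the quasimap side sees only the CY$_3$ structure on $\bC^3$, with the remaining $t_4$ entering through the fixed source curve $C$. This is ultimately a bookkeeping exercise in $\sT$-characters, which is why the lemma is stated only at the $K$-theoretic level; the upgrade to an isomorphism in $D^b\mathscr{C}\!{\it oh}_\sT$ flagged in the footnote would require a derived-categorical comparison of truncation triangles and is the genuinely non-trivial step.
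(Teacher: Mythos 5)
Your overall strategy matches the paper's: push forward along $\pi\colon X \to C$, identify the pushed-forward pair as a sheafy Hilbert-scheme datum on $C$, and compare obstruction theories by computing $K$-theory classes fibrewise over $C$. But two concrete steps in your sketch are off.

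First, the inverse of $\Phi$ is mis-described. A stable quasimap to $\Hilb(\bC^3,d)$ is by definition a tuple $(\cV, B_1, B_2, B_3, i\colon \cO_C \to \cV)$: a rank-$d$ bundle on $C$ with three commuting endomorphisms and a framing, generically stable. Since $\pi_*\cO_X = \cO_C[x_1,x_2,x_3]$ (adjunction for the affine morphism $\pi$), this is \emph{exactly} the data of an $\cO_X$-module $\cF$ that is finite over $C$, together with a section $s$; the inverse is literally to regard $\cV$ as an $\cO_X$-module via the $B_i$. There is no ideal $\scI \subset \cO_X$ intrinsically attached to the quasimap, and the ``maximal pure $1$-dimensional subsheaf of $\cO_X/\scI$'' construction is not what is needed and would not reproduce $\cF$ when $\coker s \neq 0$. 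The paper phrases the forward direction precisely as this quiver-in-sheaves reading: $\pi_*\cF$ is a sheafy representation of the framed three-loop quiver with $B_i = x_i\cdot(-)$.

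Second, the $K$-theoretic bookkeeping is not quite what you assert. Pushing $R\cHom_X(\scI,\scI)_0$ forward along the $\bC^3$-fibers does not yield ``$R(\mathrm{id}\times\pi)_*(\scI^\vee\otimes\scF)$ modulo trace and $\cO$-contributions.'' Rather, the relative local Ext class on $C$ is the vertex-type expression
\[
\cExt_\pi^*(I,I)_\perp \;=\; \cV - \cV^\vee \kappa_{123}^\vee - \cV^\vee\cV\,\tau_{123}^\vee \in K_\sT(C),
\]
which is a sheafy version of $T^\vir\Hilb(\bC^3)$ (not a single tensor product), and whose cohomology over $C$ gives the quasimap virtual tangent $R\Gamma(C, f^*T^\vir_{\Hilb(\bC^3)})$. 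The paper arrives at this by first applying the local-to-global Ext spectral sequence $\Ext^*_X(I,I) = H^*(X, \cExt_X^*(I,I))$ and then the degenerate Serre spectral sequence for $X \to C$, which is cleaner than invoking relative Serre duality directly. Your instinct about the $t_4$-versus-$\kappa$ bookkeeping is correct and is indeed the delicate point, but the formula you write down would not match on the nose.
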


We view $\Hilb(\bC^3, d)$ as the open locus, in the framed quiver
representation moduli stack
\begin{equation} \label{eq:commuting-three-loop-quiver}
  \left\{\begin{tikzcd}
      \bC \ar{r}{i} & V \ar[loop,in=50,out=90,looseness=6, "B_1"] \ar[loop,in=-20,out=20,looseness=6, "B_2"] \ar[loop,in=-90,out=-50,looseness=6, "B_3"]
    \end{tikzcd} : \begin{array}{c} {[B_1, B_2]} = 0\\ {[B_2,B_3]} = 0\\ {[B_3,B_1]} = 0\\\dim V = d\end{array} \right\}\Big/\GL(V),
\end{equation}
where $i$ is injective. By definition, a quasimap
$f\colon C \dashrightarrow \Hilb(\bC^3, d)$ is a map from $C$ to this
moduli stack, i.e. it is the same kind of quiver representation but
valued in vector bundles on $C$ instead of vector spaces. So we
sometimes denote the quasimap by the vector bundle $\cV$ which is the
analogue of $V$ above. Stability means that $f$ generically lands in
the open locus $\Hilb(\bC^3, d)$.

\begin{proof}
  This result was first observed in one lower dimension \cite[Exercise
  4.3.22]{Okounkov2017}, but the proof is the same so we only briefly
  sketch it. The PT data $\cO_C \xrightarrow{\pi_*s} \pi_*\cF$ is
  exactly a sheafy representation of the quiver above with
  multiplication by $x_i$ on $\pi_*\cF$ as the map $B_i$. Purity of
  $\pi_*\cF$ is clear, and the stability condition for a quasimap $f$
  is equivalent to $\dim \coker \pi_*s = 0$, so indeed PT and quasimap
  stabilities agree. The PT and quasimap obstruction theories are
  identified using
  \[ \Ext_X^*(I, I) = H^*(X, \cExt_X^*(I, I)) = H^*(C, \cExt_\pi^*(I, I)) \in \bk_\sT \]
  where the first equality is the local-to-global Ext spectral
  sequence and the second is the (degenerate) Serre spectral sequence.
  Letting $\cV \coloneqq \pi_*\cF$,
  \begin{equation} \label{eq:quasimap-obstruction-theory}
    \begin{aligned}
      \cExt_\pi^*(I, I)_\perp
      &\coloneqq \cExt_\pi^*(\cO, \cO) - \cExt_\pi^*(I, I) \\
      &= \cV - \cV^\vee \kappa^\vee_{123} - \cV^\vee \cV \tau^\vee_{123} \in K_\sT(C)
    \end{aligned}
  \end{equation}
  is a sheafy version of $T^\vir \Hilb(\bC^3)$, computed in exactly
  the same way. Its cohomology over $C$ is therefore
  $\cT^\vir\QM(\Hilb(\bC^3))$.
\end{proof}

\subsubsection{}

\begin{remark}
  In the $3$-dimensional setting, \eqref{eq:PT-QM-correspondence} is
  the trivial $m=0$ instance of a more general isomorphism
  \begin{equation} \label{eq:BS-QM}
    \BS_{\beta,n-\sum_i\beta_i}(A_m \times \bP^1) \xrightarrow{\sim} \QM_n\left(\Hilb(A_m, \beta)\right)
  \end{equation}
  for $A_m$ surfaces \cite{Liu2021}, where $\BS_{\beta,n}(-)$ is the
  moduli space of Bryan--Steinberg stable pairs on a crepant
  resolution \cite{Bryan2016}. This isomorphism arises via the derived
  McKay correspondence for surface singularities. It is therefore
  natural to expect an analogous isomorphism for $3$-fold
  singularities whose crepant resolutions have isolated fixed points,
  generalizing \eqref{eq:PT-QM-correspondence}.

  Note that, even more generally, \eqref{eq:BS-QM} continues to hold
  for non-trivial $A_m$-fibrations over $\bP^1$, using appropriately
  {\it twisted} quasimaps on the rhs.
\end{remark}

\subsubsection{}

Let $\bC^4 \subset X$ be the toric chart around $0 \in C$. We now
consider the PT vertex picture. For this subsection only, for clarity,
let $q \coloneqq t_4$ and write
$\sT = (\bC^\times)^3 \times \bC_q^\times$.

Let $I$ be a $\sT$-fixed PT stable pair with leg $\pi^4 = \pi$. By
Lemma~\ref{lem:PT-QM-correspondence}, its restriction to $\bC^4$ is
equivalently a $\sT$-fixed stable quasimap $\cV$ which has a
$\sT$-equivariant isomorphism
\[ \cV|_\infty = \pi \in \Hilb(\bC^3), \]
where $\bC_q^\times$ acts trivially on this
$\bC^3 = \pi^{-1}(\infty)$. The PT vertex term $\sV$ from
\eqref{eq:vertex-term} is therefore identified with
\begin{align*}
  \sV
  &= H^*\left(C, \cExt_\pi^*(I, I)_\perp\right) - T_\pi^\vir\Hilb(\bC^3) \\
  &= H^*\left(C, \cExt_\pi^*(I, I)_\perp \otimes \cO_C(-[\infty])\right)
\end{align*}
where the second equality follows from equivariant localization on $C$
and the restriction of \eqref{eq:quasimap-obstruction-theory} to
$\infty \in C$. We disregard the redundant $\bC^\times$-equivariance
of $\sV$ because $\sA$-fixed and $\sT$-fixed PT loci are equal in this
$1$-leg setting, but the remaining equivariance means it is important
to distinguish between $\cO_C(-[\infty])$ and
$\cO_C(-[0]) = q \otimes \cO_C(-[\infty])$.

\subsubsection{}

Definition~\ref{def:canonical-half} of the canonical half $\sV_{1/2}$
of $\sV$ was originally motivated by the following calculation. Recall
that an overline denotes specialization to the CY torus
$\sA \subset \sT$.

\begin{proposition} \label{prop:canonical-half-1-leg-as-QM-Hilb-C3}
  Let
  \[ \sV'_{1/2} \coloneqq H^*\left(C, (\cV - (\cV^\vee\cV)_{<_{\deg}} \tau_{123}^\vee) \otimes \cO_C(-[\infty])\right) \]
  where the subscript $<_{\deg}$ means to take only terms which are
  line bundles of negative degree. Then
  \[ \hat \se_\sA(\bar\sV'_{1/2}) = \hat \se_\sA(\bar\sV_{1/2}). \]
\end{proposition}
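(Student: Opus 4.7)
My plan is to attack this in three stages. First, I would verify that $\sV'_{1/2}$ is genuinely a half of $\sV$, i.e.\ $\sV'_{1/2} + \kappa^\vee (\sV'_{1/2})^\vee = \sV$. Second, since $\sV_{1/2} = \sV_{1/2}^{\sigma,4}$ is also a half, the difference will automatically have the form $\sS - \kappa^\vee \sS^\vee$ for some $\sS$, and the argument of Lemmas~\ref{lem:Vhalf-independent-of-a}, \ref{lem:Vhalf-independent-of-sigma} gives
\[ \hat\se_\sA(\bar\sV'_{1/2})\big/\hat\se_\sA(\bar\sV_{1/2}) = (-1)^{\rank \bar\sS^{\sA\text{-mov}}}, \]
so the claim reduces to showing this rank is even. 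Third, check this rank parity by explicit computation.

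For the first stage, I would apply equivariant Serre duality $\chi(F)^\vee = -\chi(F^\vee \otimes K_C)$ on $C = \bP^1$ with $K_C = \cO(-2) \otimes q$, together with the identity $\cO([\infty]) \otimes K_C = q \cdot \cO(-[\infty])$ and the algebraic identities $\kappa^\vee q = \kappa_{123}^\vee$ and $\kappa_{123}^\vee \tau_{123} = -\tau_{123}^\vee$. After these substitutions, the required equality reduces to
\[ \chi\bigl((A_{<_\deg} + A_{<_\deg}^\vee - A) \otimes \cO(-[\infty])\bigr) = 0 \]
for $A := \cV^\vee \cV$. The self-duality $A^\vee = A$ (immediate from the definition) forces $A_{<_\deg}^\vee = A_{>_\deg}$, so the bracketed K-class equals $-A_{=0}$; each summand of $A_{=0}$ has the form $\cO \cdot w$, and $\chi(\cO(-[\infty])) = \chi(\cO(-1)) = 0$ then gives the vanishing.

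The third stage will be the main obstacle. My approach is to decompose $\cV = \bigoplus_\beta \cO(m_\beta) \cdot w_\beta q^{-m_\beta}$, with $\{w_\beta\}$ the distinct weights of the boxes of $\pi$, and then to compute both halves in the common notation $\phi(m) := (q^{-m}-1)/(1-q)$ and $[n]_q := (1-q^n)/(1-q)$. Localization on $C$ should give $\sF_\fin = \sum_\beta w_\beta\,\phi(m_\beta)$ together with $\chi((\cV^\vee \cV)_{<_\deg} \otimes \cO(-[\infty])) = -\sum_{m_\alpha > m_\delta} w_\alpha^{-1} w_\delta\,[m_\alpha - m_\delta]_q$. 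A parallel expansion of $\sV_{1/2}^{\sigma,4}$, using $\phi(m)^\vee q^{-1} = -\phi(-m)$ and the identifications $\sF_{\fin,\ge 0} = \sum_{m_\beta < 0} w_\beta\,\phi(m_\beta)$, $\sF_{\fin,<0} = \sum_{m_\beta > 0} w_\beta\,\phi(m_\beta)$, should produce diagonal ($\alpha = \delta$) contributions $+\sum_\alpha [|m_\alpha|]_q$ from the quadratic part $(\sF_\fin^\vee \sF_\fin)_{\ge_\sigma} - (\sF_\fin^\vee \sF_\fin)_{>_\sigma} q^{-1}$ — via the $q^0$-coefficient identity $(\phi(m)^\vee \phi(m))_{q^0} = |m|$ — and $-\sum_\alpha [|m_\alpha|]_q$ from the leg--leg interaction $\sF_4^\vee \sF_{\fin,\ge 0} - \sF_{\fin,<0}^\vee \sF_4 q^{-1}$. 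These cancel exactly. Hence $\sS$ will be supported on off-diagonal pairs with $w_\alpha \neq w_\delta$, so that each coefficient $w_\alpha^{-1} w_\delta$ is a non-trivial $\sA$-character; multiplying by $\tau_{123}^\vee$ expands each summand into $2^3 = 8$ monomials, all with non-trivial $\sA$-weight. Thus $\rank \bar\sS^{\sA\text{-mov}}$ will be a multiple of $8$, in particular even. The bulk of the work, and where I expect the main source of mistakes, is confirming this exact cancellation of the diagonal contributions by careful bookkeeping of the $q^0$-parts on both sides.
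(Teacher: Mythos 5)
Stages 1 and 2 of your plan are sound, and stage 1 in particular is a clean Serre-duality argument that the paper states without proof (the paper simply notes that the discrepancy between $\cV - (\cV^\vee\cV)_{<_\deg}\tau_{123}^\vee$ and a genuine half of $\cExt^*_\pi(I,I)_\perp$ consists of degree-zero bundles contributing nothing). The structure of your argument --- reduce to a parity of $\rank \bar\sS^{\sA\text{-mov}}$ --- is also what the paper does. But stage 3, which you yourself flag as the main obstacle, is entirely hedged (``should give,'' ``should produce,'' ``I expect''), and the one decisive claim you do commit to is false.

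Specifically, the assertion that ``each coefficient $w_\alpha^{-1}w_\delta$ is a non-trivial $\sA$-character; multiplying by $\tau_{123}^\vee$ expands each summand into $2^3 = 8$ monomials, all with non-trivial $\sA$-weight'' does not hold. Multiplying a non-trivial $\sA$-character by $\tau_{123}^\vee = (1-t_1^{-1})(1-t_2^{-1})(1-t_3^{-1})$ does not preserve non-triviality: e.g.\ if $w_\alpha^{-1}w_\delta = t_1$ (adjacent boxes of $\pi$ in the $x_1$ direction), the monomial from the factor $t_1^{-1}$ is $\sA$-fixed. Moreover your coefficients are really of the form $w_\alpha^{-1} w_\delta\, q^{-k}$ with $q$-powers coming from the $[m]_q$; on $\sA$ one has $q^{-1} = t_1t_2t_3$, so $w_\alpha^{-1}w_\delta q^{-k}\big|_\sA = w_\alpha^{-1}w_\delta (t_1t_2t_3)^k$ can be trivial even when $w_\alpha\neq w_\delta$. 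So the ``multiple of $8$, in particular even'' conclusion does not follow, and the parity argument as stated collapses. The paper's proof is a computation in the same spirit but gets around this by isolating an explicit remainder $\sS$ consisting of paired terms $wq^{-1} + w^{-1}$ with $w >_\sigma 1$ (arising from a specific $q$-integer identity), and then argues that for this particular $\sS$ the $\sA$-fixed part of $\bar\sS_{1/2}$ has rank zero. You would need to identify the actual structure of the remainder before any parity statement can be made --- the rank of the $\sA$-moving part of a generic $\sS\tau_{123}^\vee$ has no a priori parity constraint.
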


Note that $\cV - (\cV^\vee \cV)_{<_{\deg}} \tau_{123}^\vee$ is almost
a half of $\cExt_\pi^*(I, I)_\perp$. The discrepancy consists of
degree-$0$ bundles which contribute nothing to $\sV_{1/2}'$, so
$\sV_{1/2}'$ is indeed a half of $\sV$.

\begin{proof}
  In this $1$-leg setting,
  \begin{equation} \label{eq:canonical-half-1-leg}
    \sV_{1/2} = \sF_\fin - \left((\sF_\fin^\vee \sF_\fin)_{\ge_\sigma} - ((\sF_\fin^\vee \sF_\fin)_{<_\sigma})^\vee q^{-1} - \sF_\fin^\vee \sF_4 q^{-1} \right) \tau_{123}^\vee
  \end{equation}
  where $\sigma$ is reverse lexicographic ordering and $\sF_4$ is the
  $(\bC^\times)^3$-character of $\pi$. Write $w \in \pi$ to mean
  $w \in \sF_4$ and let
  \[ \cV = \sum_{w \in \pi} \cO_C(d_w[0]) \otimes w \in K_\sT(C) \]
  Stability ensures that all $d_w \ge 0$. It is convenient to
  introduce the $q$-integer
  \[ [d] \coloneqq \sum_{i=1}^d q^{-i} = H^0(\cO_C(d[0]-[\infty])), \]
  after which it is easy to match the linear terms
  $\sF_\fin = \sum_{w \in \pi} w[d_w] = H^*(\cV \otimes \cO_C(-[\infty]))$.
  For the quadratic terms, for $w \neq 1$ the identity
  \[ q (w [a]^\vee [b])_{\ge_\sigma} - (w [a]^\vee[b])_{>_\sigma} = w[a]^\vee - w[a-b]^\vee + w\min(a,b) \cdot \begin{cases} 1 & w >_\sigma 1 \\ q & \text{otherwise},\end{cases} \]
  for $a, b \ge 0$, implies that the quadratic terms in the brackets in
  \eqref{eq:canonical-half-1-leg} equal
  \[ -q^{-1} \sum_{w,w' \in \pi} \left(\frac{w}{w'} [d_{w'} - d_w]^\vee + \frac{w'}{w} [d_w - d_{w'}]^\vee\right) + \sS \]
  where $\sS$ denotes a sum of terms $wq^{-1} + w^{-1}$ for weights $w
  >_\sigma 1$. In particular, $\bar\sS \tau_{123}^\vee = \bar\sS_{1/2}
  + \bar\sS_{1/2}^\vee$ with $\rank (\bar\sS_{1/2})^{\sA\text{-fix}} =
  0$, and therefore the term $\sS$ contributes only a trivial sign
  $+1$ and can be ignored. We conclude by using that
  \[ H^*\left(\cO_C(-d[0])_{<_{\deg}} \otimes \cO_C(-[\infty])\right) = -q^{-1} [d]^\vee. \qedhere \]
\end{proof}

\subsubsection{}

The choice of half in
Proposition~\ref{prop:canonical-half-1-leg-as-QM-Hilb-C3} has the
following geometric significance. Recall that
\begin{equation} \label{eq:Hilb-C3-as-critical-locus}
  \Hilb(\bC^3) = \tr \left([B_1, B_2] B_3\right) \subset \Hilb(\Free_3)
\end{equation}
is a critical locus in the Hilbert scheme of the free algebra on three
variables, i.e. the stable locus of
\eqref{eq:commuting-three-loop-quiver} without the three commutativity
relations. Since $\Hilb(\Free_3)$ is smooth, this is a global
Behrend--Fantechi-style presentation which gives
$\cT^\vir \Hilb(\bC^3) = \cT \Hilb(\Free_3) - \kappa_{123}^\vee \otimes \cT_{\Hilb(\Free_3)}^\vee$.
One might then hope that the induced
\[ \QM(\Hilb(\bC^3)) \subset \QM(\Hilb(\Free_3)) \]
is analogously an Oh--Thomas-style presentation --- the zero locus of
a section of an isotropic sub-bundle $\Lambda$ in some
$\SO(2m, \bC)$-bundle on a smooth ambient space $M$ --- which gives
$\cT^\vir_{1/2}\QM(\Hilb(\bC^3)) = \cT_M - \Lambda$, but
$\QM(\Hilb(\Free_3))$ is not smooth because
\[ \cT^\vir \QM(\Hilb(\Free_3)) = H^*\left(C, \cV + \cV^\vee \cV \otimes (t_1 + t_2 + t_3 - 1)^\vee\right) \in \bk_\sT \]
evidently can have negative terms coming from
$H^1\big((\cV^\vee \cV)_{<_{\deg}}\big)$. The half of
$\cT^\vir\QM(\Hilb(\bC^3))$ with no negative terms is given by
\[ H^*\left(C,
    \begin{array}{r} \cV + (\cV^\vee\cV)_{\ge_{\deg}} \otimes (t_1 + t_2 + t_3 + t_1t_2t_3)^\vee \\
      -\, (\cV^\vee\cV)_{<_{\deg}} \otimes (1 + t_1t_2 + t_1t_3 + t_2t_3)^\vee
    \end{array}\right). \]
Presumably the term
$(\cV^\vee\cV)_{\ge_{\deg}} \otimes (t_1 + t_2 + t_3 + t_1t_2t_3)^\vee$
should be removed and its dual added as part of the isotropic
sub-bundle of obstructions, since the dual carries the commutativity
relations $[B_a, B_b] = 0$ (like its non-sheafy version did in
\eqref{eq:Hilb-C3-as-critical-locus}) which must still be imposed.
Making this change, we arrive at
$\cV - (\cV^\vee \cV)_{<_{\deg}} \tau_{123}^\vee$.

It is unclear whether $\QM(\Hilb(\bC^3))$ actually admits a global
Oh--Thomas-style presentation, especially one yielding this half of
$\cT^\vir\QM(\Hilb(\bC^3))$.

\section{Explicit computation}
\label{sec:explicit-computation}

\subsection{Localizable fixed loci}
\label{sec:localizable-fixed-loci}

\subsubsection{}

Explicit computation of the PT vertex, using our main
Conjecture~\ref{conj:DT-PT-Ovir}, requires integration over
$\sT$-fixed PT loci of the form $\Gr_{\vec e}(\bar M)$, which can be
arbitrarily complicated, e.g. by
Example~\ref{ex:PT-full-fixed-locus-Quot-scheme}. In general, we
expect the only systematic method to compute such integrals is by
integration over the $\theta$-stable locus
$W^{\theta\text{-st}}/\GL(\vec e)$ in which $\Gr_{\vec e}(\bar M)$ is
a zero locus (see \eqref{eq:quiver-grassmannian-GIT-presentation}),
via $\GL(\vec e)$-equivariant \v Cech cohomology on
$W^{\theta\text{-st}}$ or otherwise. This is a fairly involved
procedure.

In special cases, particularly in low degree, this procedure can be
replaced by a simpler one using a better geometric understanding of
$\Gr_{\vec e}(\bar M)$, which is the main goal of this section. Such a
simplification was important for the computer verification of
Proposition~\ref{prop:DT-PT-check}.

\subsubsection{}

If $\Gr_{\vec e}(\bar M)$ admits an action by a torus
$\sR = (\bC^\times)^r$, then the perfect obstruction theory
\eqref{eq:quiver-grassmannian-obstruction-theory} defining
$\hat\cO^{\vir,\mathrm{BF}}$ can be made $\sR$-equivariant as well. By
$\sR$-equivariant virtual localization \cite{Graber1999},
\begin{equation} \label{eq:localizable-fixed-loci}
  \hat\cO^{\vir,\mathrm{BF}}_{\Gr_{\vec e}(\bar M)} = \iota_* \frac{\hat\cO^{\vir,\mathrm{BF}}_{\Gr_{\vec e}(\bar M)^\sR}}{\hat \se_\sR(\cN^{\vir,\mathrm{BF}}_\iota)} \in K_\sR(\Gr_{\vec e}(\bar M))
\end{equation}
where $\iota$ is the inclusion of the $\sR$-fixed locus, and the rhs
lives in non-localized K-theory by properness of $\Gr(\bar M)$, even
though its pieces exist only in localized K-theory. Note that this
does not require any smoothness assumption on $\Gr_{\vec e}(\bar M)$.

\begin{definition}
  A pair $(\bar M, \vec e)$ is {\it localizable} if
  $\Gr_{\vec e}(\bar M)$ has an $\sR$-action with isolated fixed
  points, on which the $\sR$-fixed part of the obstruction theory is
  trivial.
\end{definition}

For localizable $(\bar M, \vec e)$, integration over
$\Gr_{\vec e}(\bar M)$ can be performed explicitly using
\eqref{eq:localizable-fixed-loci} because
$\hat\cO^{\vir,\mathrm{BF}} = \cO$ on the $\sR$-fixed locus. In this
subsection, we identify some localizable $(\bar M, \vec e)$.

\subsubsection{}

\begin{proposition} \label{prop:full-torus-action-implies-localizable}
  Suppose that $\bar M$ admits a \emph{full $\sR$-action}, meaning
  that:
  \begin{enumerate}
  \item $\sR$ acts linearly on each $\bar M_v$ such that all maps
    $\bar M_{v \to w}$ are $\sR$-equivariant;
  \item the $\sR$-weights of each $\bar M_v$ are non-trivial and
    distinct from each other.
  \end{enumerate}
  Then $(\bar M, \vec e)$ is localizable for any $\vec e$.
\end{proposition}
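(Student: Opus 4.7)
The plan is to verify the two conditions defining localizability, namely isolated fixed points and triviality of the $\sR$-fixed obstruction theory, directly from the data of the full $\sR$-action, using the GIT presentation \eqref{eq:quiver-grassmannian-GIT-presentation} and the complex \eqref{eq:quiver-grassmannian-obstruction-theory}.

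First I would observe that condition (i) implies $\sR$ acts on the prequotient $Z(R_{\vec e, \bar M})$ commuting with $\GL(\vec e)$, so the action descends to $\Gr_{\vec e}(\bar M)$. For the fixed locus, condition (ii) says each $\bar M_v = \bigoplus_\chi (\bar M_v)_\chi$ decomposes uniquely as a direct sum of one-dimensional $\sR$-weight spaces, so any $\sR$-invariant $\bar N_v \subset \bar M_v$ is determined by the subset of weights of $\bar M_v$ that it contains. An $\sR$-fixed sub-representation $\bar N \subset \bar M$ with $\dim \bar N = \vec e$ is therefore specified by a finite tuple of such subsets subject to the edge compatibilities $\bar M_{v\to w}(\bar N_v) \subset \bar N_w$, and there are only finitely many such tuples. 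Hence the fixed locus is isolated.

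Next, I would check that at each such $\bar N$ the $\sR$-fixed part of \eqref{eq:quiver-grassmannian-obstruction-theory} is acyclic. Reading condition (ii) as distinctness of $\sR$-weights also across different vertices, the cross-vertex Hom terms vanish: $\Hom(\bar N_v, \bar N_w)^\sR = \Hom(\bar N_v, \bar M_w)^\sR = 0$ whenever $v \neq w$, since source and target share no $\sR$-weight. Within a single vertex, any weight-preserving map $\bar N_v \to \bar M_v$ is automatically valued in $\bar N_v$, so post-composition with the $\theta$-stable inclusion $\phi_v \colon \bar N_v \hookrightarrow \bar M_v$ gives an isomorphism $\Hom(\bar N_v, \bar N_v)^\sR \xrightarrow{\sim} \Hom(\bar N_v, \bar M_v)^\sR$. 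Since the $\GL(\vec e)$-action derivative $d\alpha$ restricts on $\sR$-fixed parts to precisely this map (up to sign), the first differential of the complex becomes an isomorphism after taking fixed parts, while the second differential has zero target. Acyclicity follows.

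The main obstacle, and the subtlest point, is the precise interpretation of condition (ii): one really needs distinctness of $\sR$-weights across different $\bar M_v$'s, not merely within each one, otherwise $\Hom(\bar N_v, \bar M_w)^\sR$ for an edge $v \to w$ could be nonzero and produce genuine $\sR$-fixed obstructions (even though isolated fixed points are already guaranteed by the within-vertex distinctness). With this global distinctness in hand, what remains is combinatorial bookkeeping with the $\sR$-weight decomposition and the explicit form of $d\alpha$ coming from the $\GL(\vec e)$-action.
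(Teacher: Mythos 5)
Your proof of isolatedness of the $\sR$-fixed points is fine, but the argument for triviality of the $\sR$-fixed obstruction theory has a genuine gap. You have read condition (ii) as saying that $\sR$-weights are distinct \emph{across all vertices}, and then concluded that the cross-vertex terms $\Hom(\bar N_v, \bar M_w)^{\sR}$ and $\Hom(\bar N_v, \bar N_w)^{\sR}$ vanish for $v \to w$. But this strengthened interpretation is not what condition (ii) says (it is a condition on \emph{each} $\bar M_v$ separately), and moreover it is impossible to satisfy whenever $\bar M$ has a nonzero edge map: since $\bar M_{v\to w}$ is $\sR$-equivariant and preserves weights, a nonzero map forces $\bar M_v$ and $\bar M_w$ to share weights. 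So the cross-vertex $\sR$-fixed Hom groups are generally \emph{not} zero, and your attempted acyclicity argument collapses.

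The correct point, which you do not supply, is that one does not need each of $\Hom(\bar N_v, \bar M_w)^{\sR\text{-fix}}$ and $\Hom(\bar N_v, \bar N_w)^{\sR\text{-fix}}$ to vanish; one only needs them to be \emph{equal}, so that their difference in the $\sR$-fixed part of the K-class \eqref{eq:quiver-grassmannian-obstruction-theory} cancels (the first block $\sum_v (\Hom(\bar N_v,\bar M_v)-\Hom(\bar N_v,\bar N_v))$ is $\bigoplus_v T\Gr_{e_v}(\bar M_v)$ and has no fixed part because the fixed points of $\prod_v\Gr_{e_v}(\bar M_v)$ are already isolated). Proving the equality requires an extra geometric input about the ambient quiver that your argument does not use: the maps in $\bar M$ are all either injective or surjective, as follows from the construction in \S\ref{sec:PT-full-quiver}. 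Given this, for $x \in \bar N_v$ and $y\in\bar M_w$ of the same $\sR$-weight, the within-vertex distinctness of condition (ii) pins down the one-dimensional weight spaces, and injectivity/surjectivity of $\bar M_{v\to w}$ yields $\bar M_{v\to w}(\bC x) = \bC y$, so that $y\in\bar N_w$ because $\bar N$ is a sub-representation. This is the bijection giving the required equality. In short, you correctly flagged the subtlety but resolved it by adding an untenable hypothesis instead of using the injective-or-surjective property of $\bar M$ that makes the within-vertex condition (ii) sufficient.
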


\begin{proof}
  The conditions on the $\sR$-action imply that the smooth variety
  $\prod_v \Gr_{e_v}(\bar M_v)$ already has isolated fixed points,
  hence so must the subscheme $\Gr_{\vec e}(\bar M)$. It remains to
  compute the $\sR$-fixed part of the obstruction theory
  \[ \sum_v \left(\Hom(\bar N_v, \bar M_v) - \Hom(\bar N_v, \bar N_v)\right) - \sum_{v \to w} \left(\Hom(\bar N_v, \bar M_w) - \Hom(\bar N_v, \bar N_w)\right) \in K_\sR(\pt). \]
  The first sum is $\oplus_v T \Gr_{e_v}(\bar M_v)$ and therefore has
  no fixed part. So consider the terms of the second sum. We will show
  that
  \[ \Hom(\bar N_v, \bar M_w)^{\sR\text{-fix}} = \Hom(\bar N_v, \bar N_w)^{\sR\text{-fix}} \]
  for any $v \to w$. Terms on the left (resp. right) come from
  non-zero elements $x \in \bar N_v$ and $y \in \bar M_w$ (resp.
  $\bar N_w$) of the same $\sR$-weight. We establish a bijection
  between these terms. Take $y \in \bar M_w$ of the same $\sR$-weight
  as $x$. One checks easily from its definition in
  \S\ref{sec:PT-full-quiver} that all maps in $\bar M$ are either
  injective or surjective, hence
  \[ \bar M_{v \to w}(\bC x) = \bC y \]
  by condition (ii) above. Since $\bar N$ is a quiver
  sub-representation, it follows that $y \in \bar N_w$. Conversely,
  $y \in \bar N_w$ clearly means $y \in \bar M_w$ as well.
\end{proof}

\subsubsection{}

\begin{example} \label{ex:quot-loci-are-localizable}
  All the fixed loci arising from the $\Quot$ scheme of
  Example~\ref{ex:PT-full-fixed-locus-Quot-scheme} are localizable,
  because the relevant portion of $\bar M$ consists only of many
  copies of $\bar \bC_{1234}$ all connected by isomorphisms so one can
  take any standard action of $\sR = (\bC^\times)^3$. This works even
  though such fixed loci are arbitrarily singular.
\end{example}

\subsubsection{}
\label{sec:quiver-grassmannian-reductions}

We can perform a sequence of reductions on $(\bar M, \vec e)$ without
changing the quiver Grassmannian or its perfect obstruction theory.
First, remove all nodes $v \in Q$ where $\vec e_w = 0$ for all
$w \le v$. The $w \le v$ condition is important because otherwise
$\bar N_v = 0$ may impose non-trivial relations, as in
Example~\ref{ex:quiver-grassmannian-singular}. Second, let
$\bar N^f \subset \bar N$ be the quiver sub-representation generated
by all $\bar N_v$ where $\vec e_v = \dim \bar M_v$. This ``fixed''
sub-representation is necessarily present in all
$\bar N \in \Gr_{\vec e}(\bar M)$, so
\begin{equation} \label{eq:quiver-grassmannian-reduction}
  \Gr_{\vec e}(\bar M) \cong \Gr_{\vec e - \dim \bar N^f}(\bar M/\bar N^f)
\end{equation}
as moduli schemes with perfect obstruction theory
\eqref{eq:quiver-grassmannian-obstruction-theory}. Note that if the
linear map from node $v$ to node $w$ is surjective in $\bar M$, then
it is still surjective in $\bar M/\bar N^f$.

Let
$\bar M_{n,i} \coloneqq \{\bar M_v : \dim \bar M_v = n, \; \vec e_v = i\}$.
After these two reductions, it is straightforward to check that
$(\bar M, \vec e)$ must be of the form
\begin{equation} \label{eq:reduced-quiver-maps}
  \begin{tikzcd}
    \bar M_{3,1} \ar[dashed]{r} \ar{d} \ar{dr} \ar{drr} & \bar M_{2,0} \ar[dotted]{dr} \ar[dotted]{d} \\
    \bar M_{3,2} \ar[dashed]{r} \ar[bend right,dashed]{rr} & \bar M_{2,1} \ar[dashed]{r} & \bar M_{1,0}
  \end{tikzcd}
\end{equation}
where arrows denote the direction of all quiver maps when they exist
and all quiver maps within each $\bar M_{n,i}$ are isomorphisms.
Dotted arrows can obviously be removed. The final reduction is to
remove dashed arrows: they are formed by quiver sub-representations
\[ \begin{tikzcd}
    \bar M_v \ar[twoheadrightarrow]{r}{\pi} & \bar M_w \\
    \bC^{i+\dim \ker \pi} \ar{r}{f} \ar[hookrightarrow]{u} & \bC^i \ar[hookrightarrow]{u}
  \end{tikzcd} \]
containing a ``fixed'' sub-representation generated by $\ker\pi$.
which may be removed like in \eqref{eq:quiver-grassmannian-reduction}.
After this final reduction, the nodes $\bar M_{2,0}$ may be removed
altogether.

\begin{definition}
  After this sequence of reductions, we say the resulting pair
  $(\bar M, \vec e)$ is {\it in reduced form}. The quiver
  representation $\bar M$ consists only of the solid arrows in
  \eqref{eq:reduced-quiver-maps} and the nodes linked by them.
\end{definition}

\subsubsection{}

Now we characterize whether $(\bar M, \vec e)$ in reduced form admits
a full $\sR$-action. Then
Proposition~\ref{prop:full-torus-action-implies-localizable} applies
equally well to this $\bar M$ because it uses only the property that
all maps in $\bar M$ are either injective or surjective, which is
still true.

\begin{proposition} \label{prop:full-torus-action-on-reduced-quiver}
  Let $(\bar M, \vec e)$ be in reduced form. Then $\bar M$ admits a
  full $\sR$-action iff there exists a basis for each vector space in
  $\bar M_{3,1}$ such that the subspaces in
  \[ \ker(\bar M_{3,1} \to \bar M_{2,1}), \; \ker(\bar M_{3,1} \to \bar M_{1,0}) \]
  are generated by these basis vectors.
\end{proposition}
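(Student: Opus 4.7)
The forward direction is straightforward: a full $\sR$-action makes each three-dimensional $\bar M_v$ (with $v$ in $\bar M_{3,1}$) decompose into three distinct non-trivial weight lines, giving a weight basis; any equivariant kernel is a direct sum of weight lines, hence spanned by a subset of that basis. Applied to the outgoing maps $\bar M_{3,1} \to \bar M_{2,1}$ and $\bar M_{3,1} \to \bar M_{1,0}$, this is exactly the stated conclusion.

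For the converse, the plan is to pick three generic characters $s_1, s_2, s_3$ of $\sR = (\bC^\times)^3$ and propagate a weight grading outward from $\bar M_{3,1}$. At each node $v \in \bar M_{3,1}$, declare the three basis vectors supplied by the hypothesis to carry the weights $s_1, s_2, s_3$. Then on $\bar M_{2,1}$ the relevant kernel is spanned by a single basis vector, so the surjection $\bar M_v \twoheadrightarrow \bar M_w$ transports exactly two of the three weights to $\bar M_w$; on $\bar M_{1,0}$ the kernel is spanned by two basis vectors and only the remaining weight survives. On $\bar M_{3,2}$, any quiver map $\bar M_{3,1} \to \bar M_{3,2}$ is injective or surjective between equal dimensions and hence an isomorphism, so it simply transports the grading. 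Finally, internal isomorphisms within each stratum $\bar M_{n,i}$, which by the reduction procedure constitute all of its internal maps, spread the weights to every node of $\bar M$.

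The substantive step is verifying consistency at nodes reached by several quiver paths. Within $\bar M_{3,1}$, a basis at one node transported by an internal isomorphism $\phi$ must again satisfy the kernel hypothesis at the target; this follows because, for any outgoing map $f$ at the target, $\ker(f\circ\phi)=\phi^{-1}\ker f$ and $\phi$ carries basis to basis, so compatible choices of bases across all internal isomorphisms of $\bar M_{3,1}$ always exist. The more delicate task is to check that when two distinct $\bar M_{3,1}$-nodes admit quiver maps into the same $\bar M_{2,1}$, $\bar M_{1,0}$, or $\bar M_{3,2}$ node, the two induced weight structures agree; this commutativity has to be traced through the explicit form of the maps $\pi_{S,S'}$ and $\iota_j$ from Definition~\ref{def:ambient-quiver-notation} that build $\bar M$, and I expect it to be the main technical obstacle.

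Once these compatibilities are in hand, the weights $s_1, s_2, s_3$ on each $\bar M_{3,1}$-node are distinct and non-trivial by the genericity of the choice, and the weights on the lower-dimensional nodes are proper subsets of $\{s_1, s_2, s_3\}$ and so inherit both properties. This assembles into a full $\sR$-action on $\bar M$, completing the backward direction.
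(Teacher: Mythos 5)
Your forward direction is correct and matches the paper's in content. The backward direction is where you genuinely diverge, and I think your approach has a real gap that the paper's construction is designed to avoid.

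The paper does not try to build a fixed rank-$3$ torus with the same weight triple at every node of $\bar M_{3,1}$. Instead it introduces the large auxiliary torus $\tilde\sR = (\bC^\times)^{3n}$ scaling each chosen basis vector of each $\bar M_{3,1}$-node by an independent weight, then restricts to the sub-torus $\sR \subset \tilde\sR$ cut out by the compatibility conditions at shared $\bar M_{2,1}$-neighbors. The crucial point is then that these identifications only ever equate weights attached to \emph{different} nodes of $\bar M_{3,1}$, so the three weights within any single node remain distinct; this is what preserves fullness. The rank of the resulting $\sR$ is whatever the relations dictate and is not forced to be $3$.

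Your scheme hard-codes the answer: you pick $s_1, s_2, s_3$ once and declare that the $i$-th chosen basis vector of \emph{every} node in $\bar M_{3,1}$ carries weight $s_i$. This is a strictly stronger requirement than the existence of some full $\sR$-action. Concretely, if nodes $v_1$ and $v_2$ both surject onto $w \in \bar M_{2,1}$, compatibility at $w$ forces certain cross-node weight identifications (in the paper's language, relations of the form $s_i^{(1)} = s_j^{(2)}$); whenever such a relation matches position $i$ at $v_1$ against a different position $j$ at $v_2$, your uniform labeling collapses it to $s_i = s_j$, contradicting genericity. Choosing a clever global ordering of the basis vectors (e.g.\ by the index of the standard basis vector $[e_k]$, as in the remark following the proposition) mitigates some cases, but there is no general argument that a single consistent ordering exists across all nodes simultaneously: that is precisely the graph-coloring--type problem the paper sidesteps by letting $\tilde\sR$ carry all the slack and only descending at the end. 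You do flag this consistency check as ``the main technical obstacle'' and ``the main point to be traced through,'' but leaving it open here is leaving open the entire substance of the converse, and in the rigid rank-$3$ setup I do not believe it can be closed. You should instead adopt the paper's large-torus-and-restrict argument, and then the one remaining thing to verify carefully is the parenthetical claim that the identifications in step (ii) never merge two weights belonging to the same node, which follows from the explicit $\bar\bC_S$ structure of the maps highlighted in the remark.

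One small point you got right that the paper also emphasizes: maps $\bar M_{3,1} \to \bar M_{3,2}$ are isomorphisms on nodes with no further outgoing arrows, so they transport weights harmlessly and can be ignored when checking whether a full action exists; you reached this by a slightly different route (injective or surjective between equal dimensions) but the conclusion is the same.
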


\begin{proof}
  Using the discussion of Example~\ref{ex:quot-loci-are-localizable},
  and that $\bar M_{3,2}$ admits no outgoing maps, the maps in
  $\bar M_{3,1} \to \bar M_{3,2}$ do not affect whether $\bar M$
  admits a full $\sR$-action, so they can be ignored. The desired
  bases will correspond to $\sR$-weight spaces in $\bar M_{3,1}$.

  Suppose we are given a basis for each vector space in
  $\bar M_{3,1}$. Let $n$ be the number of nodes in $\bar M_{3,1}$ and
  let $\tilde \sR \coloneqq (\bC^\times)^{3n}$ act by scaling each
  basis vector with a distinct weight. There are two obstructions to
  extending this $\tilde \sR$ action on $\bar M_{3,1}$ to a torus
  action on all of $\bar M$.
  \begin{enumerate}
  \item The maps in $\bar M_{3,1} \to \bar M_{2,1}$ and
    $\bar M_{3,1} \to \bar M_{1,0}$ must be made equivariant. Let $f$
    be such a map. Since $f$ is surjective, $f$ may be made
    $\tilde\sR$-equivariant iff $\ker f$ is generated by
    $\tilde\sR$-weight spaces.
  \item If
    $\bar M_{v_1} \xrightarrow{f_1} \bar M_w \xleftarrow{f_2} \bar M_{v_2}$
    are two maps in $\bar M_{3,1} \to \bar M_{2,1}$, step (i) puts two
    incompatible $\tilde\sR$ actions on $\bar M_w$. These two actions
    can be made equal by passing to the appropriate sub-torus of
    $\tilde \sR$.
  \end{enumerate}
  Let $\sR \subset \tilde \sR$ be the resulting sub-torus after
  executing step (ii) for all $\bar M_w \in \bar M_{2,1}$. By step
  (i), all quiver maps are $\sR$-equivariant, so $\sR$ acts on
  $\bar M$. This action is full because step (ii) preserves the
  property that the torus weights in each $\bar M_v \in \bar M_{3,1}$
  are non-trivial and distinct from each other. (It only identifies
  weights in different $\bar M_{v_1}, \bar M_{v_2} \in \bar M_{3,1}$.)
\end{proof}

\subsubsection{}

\begin{remark}
  The characterization of
  Proposition~\ref{prop:full-torus-action-on-reduced-quiver} is a
  combinatorial condition. In fact, tracing through the reductions of
  \S\ref{sec:quiver-grassmannian-reductions}, one can check that each
  $\bar M_v$ is still of the form $\bar \bC_S$ for some
  $S \subset \{1,2,3,4\}$ and kernels of all maps in $\bar M$ are
  subspaces generated by (images of) standard basis vectors $e_i$.
  Hence the desired bases are formed by (images of) three, out of
  four, {\it standard} basis vectors at each
  $\bar M_v \in \bar M_{3,1}$.
\end{remark}

\subsection{Unobstructed fixed loci}
\label{sec:unobstructed-fixed-loci}

\subsubsection{}

If $\Gr_{\vec e}(\bar M)$ is smooth of dimension equal to the rank of
the perfect obstruction theory
\eqref{eq:quiver-grassmannian-obstruction-theory} on it, then
\[ \hat\cO^{\vir,\mathrm{BF}} = \hat\cO \coloneqq \cO \otimes \det(\cT)^{-1/2} \]
can be explicitly identified. Following standard terminology, we say
that such a $(\bar M, \vec e)$ is {\it unobstructed}.

Being unobstructed is independent of being localizable, though a
general PT fixed component in sufficiently high degree has neither
property. In this subsection, we identify some unobstructed
$(\bar M, \vec e)$. The following
Example~\ref{ex:simplest-non-localizable-locus} is the basic case.

\subsubsection{}

\begin{example} \label{ex:simplest-non-localizable-locus}
  Take the legs
  \[ \pi^1 = \pi^2 = \pi^3 = \pi^4 = \raisebox{-.35\height}{\includegraphics[scale=1.5]{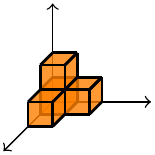}} \]
  so that the relevant portion of $\bar M$ is
  \[ \begin{tikzcd}[column sep=1em, row sep=1em]
      & \bar \bC_{234} \ar{dl} && \bar \bC_{134} \ar{dr} \\
      \bar \bC_{24} && \bar \bC_{1234} \ar{ur} \ar{ul} \ar{dr} \ar{dl} && \bar \bC_{13} \\
      & \bar \bC_{124} \ar{ul} && \bar \bC_{123} \ar{ur}
    \end{tikzcd} \]
  Consider $\Gr_{\vec e}(\bar M)$ where $\vec e$ is $1$ at all nodes
  drawn above in $\bar M$ and $0$ elsewhere. The
  $\bC \subset \bar \bC_{13}$ and $\bC \subset \bar \bC_{24}$ can be
  ignored because they do not constrain anything.

  There is a $\bP^2$ worth of freedom for the central
  $\bC \subset \bar \bC_{1234}$, which fully determines all four
  $\bC \subset \bar \bC_S$ for $|S| = 3$ {\it unless} it is in the
  kernel of one of the four maps $\bar \bC_{1234} \to \bar \bC_S$.
  These kernels are four distinct points $p_i \in \bP^2$ and one
  checks easily that
  \[ \Gr_{\vec e}(\bar M) = \Bl_{p_1,p_2,p_3,p_4}\bP^2. \]
  This is clearly not localizable; in the language of
  Proposition~\ref{prop:full-torus-action-on-reduced-quiver},
  $\ker(\bar M_{3,1} \to \bar M_{2,1})$ consists of the four lines
  $\bC e_i \subset \bar \bC_{1234}$ which cannot all be basis vectors.
  However, $\Bl_{p_1,p_2,p_3,p_4}\bP^2$ is a smooth surface, and the
  perfect obstruction theory has rank two.
  
  Let $\scN_c$ and $(\scN_i)_{i=1}^4$ be the universal line bundles of
  $\bC \subset \bar \bC_{1234}$ and the four $\bC \subset \bar\bC_S$
  respectively. They correspond to $\cO(-H)$ and
  $(\cO(E_i - H))_{i=1}^4$ where $H$ is the hyperplane and $E_i$ are
  exceptional divisors. Explicit calculation gives
  \[ \Big(\bar\sV_{1/2}\Big|_{\Gr_{\vec e}(\bar M)}\Big)^{\sA\text{-fix}} = \left(-5\scN_c - 5\right) + \sum_{i=1}^4 \left(\scN_c^\vee \otimes \scN_i + 2\scN_i\right) \]
  while
  $\cT\Gr_{\vec e}(\bar M) = (-5\scN_c^\vee - 5) + \sum_{i=1}^4 (\scN_c^\vee \otimes \scN_i + 2\scN_i^\vee)$.
  It is a non-trivial check of Conjecture~\ref{conj:DT-PT-Ovir} that
  these two expressions agree up to some duals; the sign of this
  component is therefore $-1$.
\end{example}

\subsubsection{}

Suppose $(\bar M, \vec e)$ is in reduced form, i.e. all the reductions
of \S\ref{sec:quiver-grassmannian-reductions} were performed. Suppose
further that both $\bar M_{1,0}$ and $\bar M_{3,2}$ are empty, so that
the singularities in Examples~\ref{ex:quiver-grassmannian-singular}
and \ref{ex:PT-full-fixed-locus-Quot-scheme} cannot occur. In this
setting, we can characterize unobstructedness.

\begin{proposition}
  Let $(\bar M, \vec e)$ be of the form
  $\bar M_{3,1} \to \bar M_{2,1}$. Then $(\bar M, \vec e)$ is
  unobstructed iff every pair of nodes in $\bar M_{3,1}$ has at most
  two common neighbors in $\bar M_{2,1}$.
\end{proposition}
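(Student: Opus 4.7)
The plan is to compute $\Ext^1_Q(\bar N, \bar M/\bar N)$ at every $\bar N \in \Gr_{\vec e}(\bar M)$. Since the bipartite quiver $Q$ underlying $\bar M$ has no relations, this is the cokernel of the standard two-term complex
\[
  \delta\colon \bigoplus_v \Hom(\bar N_v, (\bar M/\bar N)_v) \longrightarrow \bigoplus_{s\to t} \Hom(\bar N_s, (\bar M/\bar N)_t),
\]
whose $(s\to t)$-component is $\phi \mapsto \ell_{s\to t}(\phi_s) - c_{s\to t}\phi_t$, with $\ell_{s\to t}\colon \bar M_s/\bar N_s \twoheadrightarrow \bar M_t/\bar N_t$ the induced surjection and $c_{s\to t}$ the scalar of the induced map $\bar N_s \to \bar N_t$ (which vanishes exactly in the ``Case A'' situation $\bar N_s \subset \ker \bar M_{s\to t}$). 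Unobstructedness amounts to $\delta$ being surjective everywhere, equivalently $\delta^\vee$ injective. Explicitly, $\lambda = (\lambda_{s\to t}) \in \ker\delta^\vee$ iff $\sum_{t} \lambda_{s\to t}\,\ell_{s\to t} = 0$ in $(\bar M_s/\bar N_s)^\vee$ for every source $s$ (the \emph{source equation}) and $\sum_{s}\lambda_{s\to t}\,c_{s\to t} = 0$ for every target $t$ (the \emph{target equation}).

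For the ``only if'' direction, suppose $s_1, s_2 \in \bar M_{3,1}$ share common neighbors $t_1, t_2, t_3$. Since every node in $\bar M_{3,1}$ is canonically $\bar \bC_{1234}$ and the projections $\pi_{1234, S_t}$ depend only on $S_t$, I would construct $\bar N \in \Gr_{\vec e}(\bar M)$ with $\bar N_{s_1} = \bar N_{s_2} =: L$ (chosen away from all $\ker\pi_{1234, S_{t_j}}$), extended inductively to the remaining nodes while preserving compatibility. At such $\bar N$ one has $\ell_{s_1\to t_j} = \ell_{s_2\to t_j}$ and $c_{s_1\to t_j} = c_{s_2\to t_j}$, so setting $\lambda_{s_2\to t_j} = -\lambda_{s_1\to t_j} =: a_j$ and zero on all other edges makes the target equation at each $t_j$ automatic, and both source equations reduce to $\sum_{j=1}^3 a_j\, \ell_{s_1\to t_j} = 0$ in the $2$-dimensional space $(\bar M_{s_1}/L)^\vee$, which admits a nontrivial solution $(a_j)$.

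For the ``if'' direction, let $\lambda \in \ker\delta^\vee$ with support subgraph $H \subset Q$. The source equation at any $s$ meeting $H$ forces $|N_H(s)| \ge 2$, and the target equation at any $t$ meeting $H$ through a Case B edge forces $|N_H(t)| \ge 2$. The no-$K_{2,3}$ assumption then significantly restricts the combinatorics of $H$, and combined with the explicit description of the kernels of $\ell_{s\to t}$ in Case B as images in $\bar M_s/\bar N_s$ of the four general-position lines $\bC e_i \subset \bar \bC_{1234}$, a case-by-case analysis forces $\lambda = 0$. The main obstacle is this last step: handling Case A edges (where $c_{s\to t} = 0$ decouples $\phi_t$ from the target equation) and carrying through the interaction between the source equations at different nodes with many common neighbors under the global no-$K_{2,3}$ constraint.
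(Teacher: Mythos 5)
Your reformulation is sound: for the acyclic bipartite quiver $\bar M_{3,1}\to\bar M_{2,1}$ (no relations), unobstructedness is indeed equivalent to $\Ext^1_Q(\bar N,\bar M/\bar N)=0$ at every $\bar N$, i.e.\ surjectivity of $\delta$, and your source/target equations for $\ker\delta^\vee$ are the correct dual description. Your ``only if'' direction is essentially complete: one can even simplify the extension step by taking $\bar N_v = L$ (under the canonical identification of every $\bar M_{3,1}$ node with $\bar\bC_{1234}$) at \emph{all} sources, and $\bar N_w = \pi_{1234,S_w}(L)$ at all targets, which is automatically a sub-representation since every $\bar M_{s\to t}$ is the same projection $\pi_{1234,S_t}$; then your $\lambda$ gives a nonzero element of $\ker\delta^\vee$ because three functionals on the 2-dimensional $(\bar M_{s_1}/L)^\vee$ are linearly dependent.

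However, the ``if'' direction is a genuine gap, which you yourself flag: ``a case-by-case analysis forces $\lambda=0$'' is asserted but not carried out, and the Case A edges ($\bar N_s\subset\ker\bar M_{s\to t}$, so $c_{s\to t}=0$) really do decouple $\lambda_{s\to t}$ from the target equation at $t$, leaving only the source equation at $s$ to constrain it. Without a concrete argument that the no-$K_{2,3}$ hypothesis, together with the special position of the kernels $\bC e_i\subset\bar\bC_{1234}$, rules out a nontrivial $\lambda$ under all Case A/B interactions, the proof is incomplete. The paper handles this direction geometrically rather than by analyzing $\ker\delta^\vee$: it writes $\Gr_{\vec e}(\bar M)$ as the intersection $\bigcap_w\{\pi_{v_1\to w}=\cdots=\pi_{v_k\to w}\}\subset\prod_v\Bl_{n_v}\bP^2$, where each $\pi_{v\to w}\colon\Bl_{n_v}\bP^2\to\bP^1$ is projection from $[\ker\bar M_{v\to w}]$, observes that $\prod_{i=1}^k\pi_{v\to w_i}\colon\Bl_m\bP^2\to(\bP^1)^k$ is a submersion iff $k\le 2$, and then argues that the no-$K_{2,3}$ condition makes the intersection an iterated fiber product of submersions, hence smooth of expected dimension. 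If you want to complete your algebraic route, the analogue of that submersion fact is what your case analysis would need to extract: for a fixed $s$ and generic $\bar N_s$, any two of the functionals $\ell_{s\to t}$ are linearly independent in $(\bar M_s/\bar N_s)^\vee$ (that is the $k\le 2$ submersion), and the combinatorics of $H$ under the no-$K_{2,3}$ constraint must propagate $\lambda=0$ through the Case A edges; this is precisely the step you have not supplied.
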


\begin{proof}
  Consider the subgraph given by one node $\bar M_v \in \bar M_{3,1}$
  and its $m$ neighbors $(\bar M_{w_i})_i$ in $\bar M_{2,1}$. Like in
  Example~\ref{ex:simplest-non-localizable-locus}, one checks easily
  that the quiver Grassmannian associated to this subgraph is an
  unobstructed $\Bl_m \bP^2$. (We write $\Bl_m \bP^2$ instead of
  $\Bl_{p_1,\ldots,p_m}\bP^2$ for short.) The forgetful map which
  remembers only the subspace $\bC \subset \bar M_{w_i}$
  can be written as the composition
  \begin{equation} \label{eq:blowup-of-P2-projection-to-P1}
    \pi_{v \to w_i}\colon \Bl_m\bP^2 \to \Bl_{[\ker \bar M_{v \to w_i}]}\bP^2 \cong \bP(\cO_{\bP^1} \oplus \cO_{\bP^1}(-1)) \to \bP^1
  \end{equation}
  of the obvious maps. The key observation, for later, is that
  $\prod_{i=1}^k \pi_{v \to w_i}\colon \Bl_m\bP^2 \to (\bP^1)^k$ is a
  submersion iff $k \le 2$.

  The quiver Grassmannian for $(\bar M, \vec e)$, as a scheme,
  therefore has the description
  \begin{equation} \label{eq:unrestricted-quiver-grassmannian-as-fiber-product}
    \Gr_{\vec e}(\bar M) = \bigcap_{w\colon \bar M_w \in \bar M_{2,1}} \{\pi_{v_1 \to w} = \cdots = \pi_{v_k \to w}\} \subset \prod_{v\colon \bar M_v \in \bar M_{3,1}}\Bl_{n_v}\bP^2
  \end{equation}
  where, for given $w$, the $v_i$ range over all incident edges
  $(\bar M_{v_i \to w})_i$. It is an easy computation that the rank of
  the perfect obstruction theory on $\Gr_{\vec e}(\bar M)$ is the
  expected dimension, i.e. the dimension of the ambient space minus
  the number of equations. If every pair of nodes in $\bar M_{3,1}$
  has at most two common neighbors, then
  \eqref{eq:unrestricted-quiver-grassmannian-as-fiber-product} is an
  iterated sequence of fiber products of submersions and hence is
  smooth of expected dimension. In the presence of three or more
  common neighbors, there is the phenomenon that
  \[ \Bl_3 \bP^2 \times_{\pi_1 \times \pi_2 \times \pi_3} \Bl_3 \bP^2 = \Bl_{p_1,p_2,p_3}\bP^2 \]
  is a smooth variety with higher than expected dimension, ruining
  unobstructedness. Here the $\pi_i$ are the three possible maps
  \eqref{eq:blowup-of-P2-projection-to-P1} from $\Bl_3\bP^2$.
\end{proof}

\phantomsection
\addcontentsline{toc}{section}{References}

\begin{small}
\bibliographystyle{alpha}
\bibliography{vertex}
\end{small}

\end{document}